\def\author@andify{%
  \nxandlist {\unskip ,\penalty-1 \space\ignorespaces}%
    {\unskip {} \@@and~}%
    {\unskip ,\penalty-2 \space }%
}
\def \I {{\mathcal I}}
\def \P {{\mathbb P}}
\def \M {{\mathsf M}}
\def \e {{\mathcal E}}
\def \f {{\mathcal F}}
\def \S {{\mathsf S}}
\def \ext {{\mathcal {E}xt}}
\DeclareMathOperator{\ch}{ch}
\DeclareMathOperator{\sm}{small}
\newcommand{\comment}[1]{}
\renewcommand{\tilde}[1]{\widetilde{#1}}
\renewcommand{\bar}[1]{\overline{#1}}
\newtheorem{theorem}{Theorem}
\newtheorem{lemma}{Lemma}
\newtheorem{corollary}{Corollary}
\newtheorem{conjecture}{Conjecture}
\theoremstyle{definition}
\theoremstyle{definition}
\newtheorem{remark}{Remark}
\theoremstyle{definition}
\newtheorem{definition}{Definition} 
\newtheoremstyle{TheoremNum}
        {7pt}{7pt}              %%% space between body and thm
        {\itshape}                      %%% Thm body font
        {}                              %%% Indent amount (empty = no indent)
        {\bfseries}                     %%% Thm head font
        {.}                             %%% Punctuation after thm head
        { }                             %%% Space after thm head
        {\thmname{#1}\thmnote{ \bfseries #3}}%%% Thm head spec
    \theoremstyle{TheoremNum}
    \newtheorem{thmn}{Theorem}
    \newtheorem{conj}{Conjecture}
\begin{document}

\baselineskip=15pt
\begin{center}
\title [On product identities] {On product identities and the Chow rings of holomorphic symplectic varieties}

\author[I. Barros]{Ignacio Barros}
\author[L. Flapan]{Laure Flapan}
\author[A. Marian]{Alina Marian}
\author[R. Silversmith]{Rob Silversmith}

\address[I. Barros]{
Universit\'{e} Paris-Saclay, LMO\\
Rue Michel Magat, B\^{a}t. 307,
91405 Orsay, France} 
\email{ignacio.barros@universite-paris-saclay.fr}

\address[L. Flapan]{
Department of Mathematics\\
Massachusetts Institute of Technology\\
77 Massachusetts Ave.\\
Cambridge, MA 02139, USA} 
\email{lflapan@mit.edu}

\address[A. Marian]{
Department of Mathematics\\
Northeastern University\\
360 Huntington Ave.\\
Boston, MA 02115, USA} 
\email{a.marian@northeastern.edu}

\address[R. Silversmith]{
Department of Mathematics\\
Northeastern University\\
360 Huntington Ave.\\
Boston, MA 02115, USA} 
\email{r.silversmith@northeastern.edu}

\begin{abstract}
For a moduli space $\M$ of stable sheaves over a $K3$ surface $X$, we propose a series of conjectural identities in the Chow rings $CH_\star (\M \times X^\ell),\, \ell \geq 1,$ generalizing the classic Beauville-Voisin identity for a $K3$ surface. We emphasize consequences of the conjecture for the structure of the tautological subring $R_\star (\M) \subset CH_\star (\M).$ The conjecture places all tautological classes in the lowest piece of a natural filtration emerging on $CH_\star (\M)$, which we also discuss. We prove the proposed identities when $\M$ is the Hilbert scheme of points on a $K3$ surface.
\end{abstract}

\comment{
\begin{abstract}
We propose conjectural identities in the Chow rings of hyperk\"{a}hler varieties of $K3$ type which generalize the classical Beauville-Voisin identity for $K3$ surfaces. We emphasize consequences of the conjecture for the structure of the tautological subring of the Chow ring. We prove the proposed identities for the Hilbert scheme of points on a K3 surface.
%These proposed identities imply, for instance,
%the vanishing of the modified diagonal cycle %conjectured by O'Grady.
%in \cite{OG3}. 

\end{abstract}}
\maketitle
\end{center}

\section{Introduction}\label{sec: intro}

\vskip.3in

Understanding the Chow ring of irreducible holomorphic symplectic varieties is a problem of considerable interest. In the case of a
smooth projective $K3$ surface $X$, an essential role in approaching the cycle structure is played by a distinguished zero-cycle $c_X$, first noted and studied in \cite{BV}.
The cycle $c_X$ has degree one and is the Chow class of any point lying on a rational curve in $X$. The intersection of any two divisors is a multiple
of $c_X$, while the second Chern class of the tangent bundle satisfies $$c_2 (TX) = 24 \,c_X.$$

In higher dimensions, it is natural to consider moduli spaces of stable sheaves on $K3$ surfaces. For a smooth projective $K3$ surface $X$, we let $v \in H^{\star} (X, \, \mathbb Z)$ be a primitive Mukai vector, and let $\M$ be the moduli space of stable sheaves over $X$ with  Mukai vector $v$, relative to a $v$-generic polarization. We note nevertheless that all statements in this text apply more broadly to moduli spaces of Bridgeland-stable sheaf complexes with respect to a $v$-generic stability condition $\sigma$.
The moduli space $\M$ is a smooth projective irreducible holomorphic symplectic variety (cf. \cite{huybrechts2}, Section 10.2 and the essential references therein) of dimension
$$\dim  \M = m = \langle v, \, v \rangle +2,$$
admitting a quasi-universal sheaf  $$\mathcal F \to \M \times X.$$ To keep the exposition simple we assume in fact that $\M$ is a fine moduli  space, so $\f$ is a universal object. The restriction is not essential, as we later explain. We denote the two projections by $\pi: \M \times X \to \M$ and $\rho: \M \times X \to X.$

In parallel with the $K3$ geometry, there is a distinguished zero-cycle $$c_{\M} \, \in \, CH_0 (\M)$$ of degree one: this is the class of any stable sheaf $F$ such that 
\begin{equation}
\label{special}
c_2(F) = k \,c_X \, \, \, \text{in} \, \, \, CH_0 (X),
\end{equation}
where $k$ is the degree of the second Chern class specified by the Mukai vector $v$. Sheaves satisfying \eqref{special} exist in $\M$ (cf. \cite{OG2}), and have the same Chow class as shown in \cite{MZ}, following a conjecture of \cite{SYZ}. In analogy with the surface situation, one expects (\cite{V1}, \cite{SYZ}) that the special cycle corresponds to the largest rational equivalence orbit of points on $\M$. The intersection-theoretic properties of $c_{\M}$ are not understood as well as those of its counterpart $c_X$ in the two-dimensional context.  

We study the geometry of the universal sheaf and of the special cycles $c_X$ and $c_{\M}$ in two strands:
\vskip.1in

\begin{enumerate}[label=(\roman*)]
\item
We single out the {\it tautological subring} $R_{\star} (\M) \subset CH_{\star} (\M)$, generated by the classes
$$\pi_{\star} \left (M (c_i (\f)) \cdot \rho^{\star} \beta \right ),$$
with $M$ any monomial in the Chern classes of $\f$, and $\beta$ any class in the Beauville-Voisin subring
$$R_\star (X) = CH_2 (X) + CH_1 (X) + \mathbb Z \,c_X \subset CH_\star (X).$$

\comment{\begin{itemize}
\item $\pi_{\star} (M (c_i (\f))),$ with $M$ any monomial in the Chern classes of $\f$;
\item $\pi_{\star} \left (M (c_i (\f)) \cdot \rho^{\star} D \right ),$ with $D \in CH_1 (X), \, M$ any monomial in the Chern classes of $\f$;
\item $\pi_{\star}\left  (M (c_i (\f)) \cdot \rho^{\star} c_X \right ),$ with $M$ any monomial in the Chern classes of $\f$.
\end{itemize}}

\item
We emphasize the rank zero virtual sheaf
%\begin{equation}
%\label{difference2}
$$\overline \f = \f - \rho^{\star} F, \, \, \, \text{with} \, \, \, F \in \M \, \, \text{such that}  \, \, [F] = c_{\M} \in CH_0 (\M).$$
%end{equation}
Intuitively, the second Chern class of $\overline\f$ reflects to some extent the variation of rational equivalence classes across points in $\M$, relative to the special class. 
%Note this parallels  the notation of equation \eqref{difference1}.
\end{enumerate}

\medskip

These two strands come together naturally within the framework of the following, which is the main conjecture of the paper.

\begin{conjecture}
\label{principal}
Let $\alpha \in R_{\star} (\M)$ be a tautological class of codimension $d$. Consider the product $\M \times X^{\ell}$, where $d + \ell > \dim \M.$ Let $\overline \f_i$ denote the 
pullback to $\M \times X^{\ell}$ of the virtual universal sheaf on $\M$ and the $i$th factor of $X$. Then for every $i_1, \ldots, i_\ell \geq 0,$ 
\begin{equation}
\label{main}
\alpha \cdot ch_{i_1} (\overline \f_1) \cdots ch_{i_\ell} (\overline \f_{\ell}) = 0 \, \, \, \in \, \, \, CH_{\star} (\M \times X^{\ell}).
\end{equation}
\end{conjecture}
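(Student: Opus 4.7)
The plan is to specialize to the Hilbert scheme case $\M = X^{[n]}$ and combine two main ingredients: an explicit description of $\overline{\mathcal I}$ via the universal subscheme $\mathcal Z \subset \M \times X$, and an induction on $n$ using the nested Hilbert scheme $X^{[n-1,n]}$.

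First I would exploit the exact sequence $0 \to \mathcal I \to \mathcal O_{\M \times X} \to \mathcal O_{\mathcal Z} \to 0$, together with its analogue $0 \to I_Z \to \mathcal O_X \to \mathcal O_Z \to 0$, to rewrite $\overline{\mathcal I} = \mathcal I - \rho^\star I_Z$ in $K$-theory as $\rho^\star \mathcal O_Z - \mathcal O_{\mathcal Z}$, where $Z \subset X$ is a length-$n$ subscheme on a rational curve with $[I_Z] = c_\M$. The crucial feature is that this $K$-class restricts to zero on the fiber $\{I_Z\} \times X$, so each $ch_{j_i}(\overline{\mathcal I}_i)$ restricts to zero on $\{I_Z\} \times X^\ell$. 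Since $\pi^\star c_\M = [\{I_Z\} \times X^\ell]$ in $CH^{2n}(\M \times X^\ell)$, the projection formula for the regular embedding immediately gives $\pi^\star c_\M \cdot \prod_i ch_{j_i}(\overline{\mathcal I}_i) = 0$; this settles the conjecture whenever $\alpha$ is proportional to $c_\M$.

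For general tautological $\alpha$ of codimension $d$ with $d + \ell > 2n$, I would induct on $n$. The nested Hilbert scheme $X^{[n-1,n]}$, with projections $p: X^{[n-1,n]} \to X^{[n-1]}$, $q: X^{[n-1,n]} \to X^{[n]}$, and the residual point map $r: X^{[n-1,n]} \to X$, provides the bridge. Pulling back to $X^{[n-1,n]} \times X$, the short exact sequence $0 \to p^\star \mathcal O_{\mathcal Z_{n-1}} \to q^\star \mathcal O_{\mathcal Z_n} \to \mathcal O_\Gamma \to 0$, with $\Gamma$ the graph of $r$, lets one expand $q^\star ch_j(\overline{\mathcal I}_n)$ in terms of $p^\star ch_j(\overline{\mathcal I}_{n-1})$ and classes supported on $\Gamma$. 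Since $p \times r: X^{[n-1,n]} \to X^{[n-1]} \times X$ is birational, one can pull back and push forward through this bridge to convert the identity on $X^{[n]} \times X^\ell$ into an identity on $X^{[n-1]} \times X^{\ell+1}$, the extra $X$-factor encoding the residual point, with $\alpha$ replaced by a tautological class of appropriately shifted codimension. The inductive hypothesis on $X^{[n-1]}$, of strictly smaller dimension $2(n-1)$, then concludes.

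The main obstacle I anticipate is the combinatorial bookkeeping in this induction: Chern character expansions of the extension produce numerous tautological summands whose codimensions shift intricately, and verifying that the inductive inequality $d' + \ell' > 2(n-1)$ is inherited by every resulting term requires careful accounting. A potentially cleaner alternative worth exploring is the Nakajima formalism on $\bigoplus_n CH^\star(X^{[n]})$: translating both $\alpha$ and the rank-zero factors $\overline{\mathcal I}_i$ into creation/annihilation operators, the vanishing should follow from explicit commutation relations and annihilation of the vacuum by lowering operators, though constructing the dictionary for the $\overline{\mathcal I}_i$ is itself a subtle step.
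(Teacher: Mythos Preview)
Your inductive strategy via the nested Hilbert scheme is exactly the paper's approach, but there is a genuine gap in the step where you ``convert the identity on $X^{[n]}\times X^\ell$ into an identity on $X^{[n-1]}\times X^{\ell+1}$.'' The birationality of $\sigma=p\times r:X^{[n-1,n]}\to X^{[n-1]}\times X$ does \emph{not} make $\sigma_*$ injective: the Chow groups of $X^{[n-1,n]}\cong\mathbb P(\mathcal I_{n-1})$ are strictly larger, and after pulling back via $q$ and expanding through the short exact sequence you wrote, $q^*\gamma$ becomes a polynomial in the tautological class $\lambda=c_1(\mathcal O(1))$ with coefficients pulled back from $X^{[n-1]}\times X$. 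Pushing forward by $\sigma$ collapses this $\lambda$-direction, so $\sigma_*q^*\gamma=0$ by induction does not yield $q^*\gamma=0$, hence not $\gamma=0$. The paper's key Lemma~\ref{lemmaXn:1} closes exactly this gap: it shows that $\gamma=0$ on $X^{[n]}\times X^k$ if and only if \emph{both} $\sigma_*q^*\gamma=0$ and $\sigma_*(\lambda\cdot q^*\gamma)=0$. Here the Nakajima formalism is not an alternative route but an indispensable ingredient: the two operations $\sigma_*q^*$ and $\sigma_*(\lambda\cdot q^*)$ are the operator $\mathfrak q_{-1}$ and its commutator $\mathfrak q_{-1}^{(1)}=[\delta,\mathfrak q_{-1}]$ with the boundary, and one needs the Chow-level identity $[\mathfrak q_{-1}^{(1)},\mathfrak q_{-i}]=i\,\Delta_*\mathfrak q_{-i-1}$ of Maulik--Negu\cb{t} to generate all lowering operators and then invoke the de~Cataldo--Migliorini injectivity.

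Two further points. First, the induction does not close up on Conjecture~\ref{principal} as stated: pushing down via $\sigma$ lands you in $X^{[n-1]}\times X^{k+1}$, so the inductive hypothesis must allow tautological $\alpha$ on products $X^{[n]}\times X^k$ and must also allow some of the $\ell$ factors to carry modified diagonals $\overline\Delta_{s_t,t}$ between $X$-factors rather than $\ch_{i_t}(\overline{\mathcal I}_n)$; the paper formulates this strengthening as Conjecture~\ref{principal}$^*$. Second, you have not addressed the base case $n=1$, which reduces (after a short reduction replacing $\ch(\mathcal O_{\overline\Delta})$ by $\overline\Delta$) precisely to the three fundamental $K3$ identities \eqref{bv}, \eqref{second}, \eqref{third}; your observation for $\alpha=c_\M$ is correct but is only the easiest of these.
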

The K\"{u}nneth components along $\M$ of the Chern classes $c_i (\overline \f)$ on $\M \times X$ have positive cohomological degrees for $i>0$. Since $\M$ has no odd cohomology (in particular $b_1(\M)=0$), the products \eqref{main} are homologically trivial for dimension reasons due to the inequality $d + \ell > \dim \M$.

Conjecture \ref{principal} yields a rich collection of interesting Chow identities and we highlight some of them now. In case $\M = X,$ viewed trivially as the Hilbert scheme of one point on itself, we have
$$\overline \f = \overline \I, \, \, \, \text{where} \, \, \, \overline {\I} = \I_{\Delta} - \I_{X \times c} \, \, \, \text{on} \, \, \, X \times X,$$
with $c \in X$ a point of Chow class $c_X$.
Therefore $$ch_2 (\overline \f) = ch_2 (\overline \I) = - \, \overline \Delta,$$ where we have set $$ \overline \Delta =  \Delta - X \times c_X  \, \, \, \, \text{in} \, \, \, \,  CH_2 (X \times X). $$
Thus when $\M = X$, for the tautological class $\alpha = 1,$ the identity $$ch_2 (\overline \f_1) \cdot ch_2 (\overline \f_2) \cdot ch_2 (\overline \f_3) = 0 \, \, \, \text{in} \, \, \, CH_2 (X \times X^3)$$ predicted under \eqref{main} takes the form 
\begin{equation}
\label{bv}
\overline \Delta_{01} \cdot \overline \Delta_{02} \cdot \overline \Delta_{03} = 0 \, \, \, \text{in} \, \, \,  CH_2 (X \times X^3),
\end{equation}
while the $K$-theoretic identity 
\begin{equation} 
\label{ideal}
\overline \I_{01} \cdot \overline \I_{02} \cdot \overline \I_{03} = 0 \, \, \, \text{in} \, \, \, K (X \times X^3)
\end{equation}
also holds. Here the index 0 is used to keep track of the first distinguished factor $X$ in the quadruple product $X^4$, and $\overline \Delta_{0i}$ and $\overline \I_{0i}$ indicate pullbacks from the 0th and $i$th factors. 

Pushing forward to the product of the last three factors, equation \eqref {bv} is easily seen to be equivalent to the fundamental Beauville-Voisin identity \cite{BV}
\begin{equation}
\label{bv0}
\Delta - \Delta_{c} + \Delta_{c,  c} = 0 \, \, \, \text{in} \, \, \, CH_2 (X \times X \times X).
\end{equation}
Here $c$ again denotes a fixed point of Chow class $c_X$; $\Delta$ is the small diagonal of points $(x,x,x)$; $\Delta_c$ consists of triples of the form $(x, x, c), \, (c, x, x), \, (x, c, x)$; $\Delta_{c,c}$ is the set of triples of the form $(c, c, x), \, (c, x, c), \, (x,c,c)$ for $x \in X.$

If we now take $\alpha = D,$ a divisor class on $X$, the vanishing $$\alpha \cdot ch_2 (\overline \f_1) \cdot ch_2 (\overline \f_2) = 0 \, \, \, \text{in} \, \, \, CH_1 (X \times X^2)$$ predicted by Conjecture \ref{principal} becomes
\begin{equation}
\label{second}
D^{(0)}\cdot \overline \Delta_{01} \cdot \overline \Delta_{02} = 0 \, \, \, \text{in} \, \, \,  CH_1 (X \times X^2),
\end{equation}
(with the divisor $D$ pulled back from the $0$th factor) which is known to hold. Indeed, any divisor class $D$ on $X$ is a linear combination of classes of rational curves on $X$, and the cycles $\overline \Delta_{01}, \, \overline \Delta_{02}$ restrict to zero for every point on a rational curve in the $0$th factor $X$. The spreading principle (cf. \cite[Theorem 3.1]{V2}, see also Remark \ref{Lagrangian} in Section \ref{sec: taut} below) implies \eqref{second}.
%\textcolor{blue}{that $D^{(0)}\cdot\overline{\Delta}_{01}$, seen as a family of cycles in $X\times X^2$ over %the $0$th copy of $X$, is supported over points with class $c_X$, giving us} \eqref{second}. 
We also note the trivial identity 
\begin{equation}
\label{third}
c_X^{(0)} \cdot \overline \Delta_{01} = 0 \, \, \, \text{in} \, \, \,  CH_0 (X \times X).
\end{equation}

Returning now to the case of a general moduli space $\M$, we see that for $\alpha = 1,$ the expected vanishing
\begin{equation}
ch_2 (\overline \f_1) \cdot ch_2 (\overline \f_2) \cdots ch_2 (\overline \f_{m+1}) = 0 \, \, \, \text{in} \, \,\,  CH_m (\M \times X^{m+1}) 
\end{equation}
predicted by Conjecture \ref{principal} is the natural generalization of the Beauville-Voisin fundamental identity \eqref{bv0} in the triple product of a $K3$ surface. 
The beautiful identity 
\begin{equation}\label{conjKTheory}
\overline \f_1 \cdot \overline \f_2 \cdots \overline \f_{m+1} = 0 \, \, \, \text{in} \, \, \, K (\M \times X^{m+1})
\end{equation}
is also predicted by Conjecture \ref{principal}.

For tautological classes $\alpha \in R_{\star} (\M)$ of positive codimension, the series of identities predicted by Conjecture \ref{principal} should be viewed as generalizing  \eqref{second} and \eqref{third} from the $K3$ context to a general moduli setup. 

\vskip.1in

The identities of Conjecture 1 lead in turn to a large collection of conjectural Chow vanishings in the self-products $\M \times \M \times \cdots \times \M.$ 
We set 
$$\overline \Delta = \Delta - \M \times c_{\M} \, \, \, \text{in} \, \, \, CH_m (\M \times \M),$$
and observe
\begin{theorem}
\label{thm:applications}
The system of identities \eqref{main} of Conjecture \ref{principal} is equivalent to the vanishing
\begin{equation}
\label{mconseq}
\alpha \cdot \overline \Delta_{01} \cdots \overline \Delta_{0, \ell} = 0 \, \, \, \text{in} \, \, \, CH_{\star} (\M \times \,\M^{\ell}).
\end{equation}
for any tautological class $\alpha \in R_{\star} (\M)$ of codimension $d$ and integer $\ell$ satisfying $$d + \ell > \dim \M.$$ Here the first factor of $\M$ is labeled by 0, and $\alpha$ is pulled back from this factor. 
\end{theorem}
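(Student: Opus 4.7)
The plan is to exhibit a formula expressing the class $\overline\Delta$ on $\M\times\M$ as a pushforward from $\M\times\M\times X$ of a universal polynomial in the Chern characters of $\overline\f$ pulled back to the two factors, and then to run both directions of the equivalence as substitution arguments.

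The central technical input is an identity of the form
$$\overline\Delta \;=\; p_{12\star}\!\left(P\bigl(\mathrm{ch}(p_{13}^\star \overline\f),\,\mathrm{ch}(p_{23}^\star \overline\f)\bigr)\cdot p_3^\star\tau\right) \quad \text{in } CH_\star(\M\times\M),$$
where $p_{12},p_{13},p_{23}$ denote the projections from $\M\times\M\times X$ to the labeled factors, $P$ is a universal polynomial, and $\tau\in R_\star(X)$ comes essentially from $\sqrt{\mathrm{td}_X}$. Such a formula should be derived from Grothendieck--Riemann--Roch applied to the virtual complex $-Rp_{12\star}R\mathcal{H}om(p_{13}^\star\f,\,p_{23}^\star\f)$: the resulting Chern classes are polynomials in $\mathrm{ch}(\f_1)$ and $\mathrm{ch}(\f_2)$, and the subtraction of $\rho^\star F$ built into the definition of $\overline\f$ arranges that the constant Mukai-pairing contribution is compensated exactly by $\M\times c_\M$, leaving $\overline\Delta$ on the right-hand side.

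For Conjecture \ref{principal} $\Rightarrow$ \eqref{mconseq}, the plan is to substitute this expression into each factor $\overline\Delta_{0j}$ of $\alpha\cdot\overline\Delta_{01}\cdots\overline\Delta_{0\ell}$, converting the product into a sum of pushforwards from $\M\times\M^\ell\times X^\ell$ of expressions
$$\alpha\cdot\prod_{j=1}^\ell \mathrm{ch}_{a_j}(\overline\f_{0,j})\cdot\mathrm{ch}_{b_j}(\overline\f_{j,j})\cdot\tau_j,$$
with the first subscript labeling the $\M$-factor and the second the $X$-factor. Pushing forward along each $\M_j$ first turns $\mathrm{ch}_{b_j}(\overline\f_{j,j})$ into a class in $R_\star(X_j)$, and the result takes exactly the shape of the left-hand side of \eqref{main}, with an enlarged tautological class $\alpha'$ absorbing the $R_\star(X_j)$-factors; these vanish by Conjecture \ref{principal}. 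The reverse implication proceeds by inverting the formula: each $\mathrm{ch}_i(\overline\f)$ is recoverable from $\overline\Delta$ via a suitable push-pull against a dual system of tautological classes on $\M\times\M\times X$, so that $\alpha\cdot\prod_j\mathrm{ch}_{i_j}(\overline\f_j)$ can be rewritten as a sum of pushforwards of $\alpha\cdot\prod_j\overline\Delta_{0j}$ intersected with tautological factors, each vanishing by \eqref{mconseq}.

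The main obstacle is establishing the precise formula for $\overline\Delta$: plain Grothendieck--Riemann--Roch applied to $R\mathcal{H}om$ produces only the numerical Mukai pairing $\chi(F_1,F_2)$ rather than the diagonal class itself, so a refined identity analogous to Markman's tautological presentation of $\Delta$ on moduli of sheaves will be required, together with a careful verification that the ``special cycle'' corrections balance exactly against $\M\times c_\M$. Once this is in place, both implications reduce to projection-formula bookkeeping.
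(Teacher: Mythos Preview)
Your overall strategy---express $\overline\Delta$ through Chern characters of the normalized universal sheaf and run projection-formula arguments in both directions---is the paper's strategy, but you make each direction harder than necessary, and the ``main obstacle'' you flag is already dissolved by Markman's identity.

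For the forward implication you seek a formula with $\mathrm{ch}(\overline\f)$ on \emph{both} $\M$-factors. This is more than is needed. The paper simply uses Markman's identity $c_m(\mathsf W(\e,\f))=[\Delta]$ together with its restriction $c_m(\mathsf W(\e,\rho^\star F))=c_\M$ to the special point, so that $\overline\Delta = c_m(\mathsf W(\e,\f)) - c_m(\mathsf W(\e,\rho^\star F))$. The $K$-theory splitting $\mathsf W(\e,\f)=\mathsf W(\e,\overline\f)+\mathsf W(\e,\rho^\star F)$ then expands this difference as $\sum_{i\ge1} c_i(\mathsf W(\e,\overline\f))\cdot c_{m-i}(\mathsf W(\e,\rho^\star F))$, and each $c_i(\mathsf W(\e,\overline\f))$ for $i>0$ is a polynomial in classes $\pi_\star(\mathrm{ch}_a\,\e^\vee\cdot\mathrm{ch}_b\,\overline\f\cdot(\mathrm{td}\,X)^\epsilon)$ by Grothendieck--Riemann--Roch. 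Only the $\overline\f$ side (the ``$0$'' factor) needs to be normalized; the $\e$ side stays as the full universal sheaf. So no new refined identity beyond Markman's is required, and the ``special-cycle corrections'' balance automatically because $c_\M$ is \emph{defined} as $c_m(\mathsf W(\e,\rho^\star F))$.

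For the reverse implication the paper's argument is a single push-pull and needs no ``dual system'' or inversion of any formula. Pull the vanishing $\alpha\cdot\overline\Delta_{01}\cdots\overline\Delta_{0\ell}=0$ back to $\M\times\M^\ell\times X^\ell$, multiply by $\prod_s \mathrm{ch}_{i_s}(\f_s)$ with $\f_s$ the \emph{unnormalized} universal sheaf on $\M_s\times X_s$, and push forward along the projection forgetting $\M^\ell$. Since $\overline\Delta=\Delta-\M\times c_\M$, the correspondence $\overline\Delta_{0s}$ sends $\mathrm{ch}_{i_s}(\f_s)$ to $\mathrm{ch}_{i_s}(\f)-\mathrm{ch}_{i_s}(\rho^\star F)=\mathrm{ch}_{i_s}(\overline\f)$ on $\M_0\times X_s$, yielding the identities of Conjecture~\ref{principal} at once.
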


Theorem \ref{thm:applications} is immediately seen to have a few interesting consequences. 
At one end, if we take $\alpha \in R_{\star} (\M)$ to be a tautological zero-cycle of degree $a$, and pick $\ell=1,$ we obtain the vanishing
$$\alpha \cdot \overline \Delta_{01}  = 0 \, \, \,\text{in} \,\, \, CH_0 (\M \, \times \, \M).$$
Pushing forward to the second factor of $\M$, this gives $$\alpha = a \, c_{\M},$$ and allows us to conclude
\begin{corollary}\label{cor: 0 cycle}
Assuming Conjecture \ref{principal} holds, the tautological ring $R_{\star} (\M) $ has rank one in dimension zero:
$$R_0 (\M) = \mathbb Q \cdot c_{\M}.$$
\end{corollary}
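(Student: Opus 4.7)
The plan is to apply Theorem~\ref{thm:applications} in the smallest admissible regime. Given a tautological zero-cycle $\alpha \in R_0(\M)$ of degree $n$, I would set $d = \dim \M = m$ (the codimension of a zero-cycle) and $\ell = 1$; the hypothesis $d + \ell > m$ then holds, and the theorem produces the vanishing
\begin{equation*}
\alpha \cdot \overline{\Delta}_{01} = 0 \quad \text{in} \quad CH_0(\M \times \M),
\end{equation*}
where $\alpha$ is pulled back from the first factor.

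To extract the corollary, I would push this identity forward to the second factor of $\M$ and expand $\overline{\Delta}_{01} = \Delta - \M \times c_{\M}$. The diagonal piece contributes $\alpha$, viewed now on the second factor, since intersecting a pullback from the first factor with the diagonal and projecting to the second recovers the original cycle. The piece $\M \times c_{\M}$ contributes $n \cdot c_{\M}$ by the projection formula, as the push-forward to the second factor of a zero-cycle pulled back from the first equals its degree times the fundamental class. Combining produces $\alpha = n \cdot c_{\M}$ in $CH_0(\M)$, which is exactly the claim.

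There is no serious obstacle: once Theorem~\ref{thm:applications} is granted, the corollary reduces to a single application with $\ell = 1$ followed by a routine push-forward computation. The only point to verify carefully is the codimension--dimension accounting in the hypothesis of Theorem~\ref{thm:applications} and the projection-formula bookkeeping that produces the coefficient $\deg \alpha$ on $c_{\M}$.
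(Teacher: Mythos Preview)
Your proposal is correct and follows essentially the same approach as the paper: apply Theorem~\ref{thm:applications} with $d=m$ and $\ell=1$ to obtain $\alpha\cdot\overline{\Delta}_{01}=0$, then push forward to the second factor and expand to deduce $\alpha=n\,c_{\M}$. The paper states this argument in the paragraph immediately preceding the corollary; you have simply spelled out the projection-formula step in more detail.
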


At the other end, taking $\alpha = 1$ yields the vanishing
\begin{equation}
\label{moddiag}
{\overline {\Delta}}_{01} \cdots \overline \Delta_{0,m+1} = 0 \, \, \, \text{in} \, \, \, CH_m (\M\, \times \, \M^{m+1}).
\end{equation}
It is easy to see that the pushforward of this product cycle, via the projection $\M \, \times \, \M^{m+1}\to \M^{m+1}$ forgetting the first factor, is the modified diagonal cycle studied in \cite{OG3}. Conjecture \ref{principal} recovers in this case the vanishing of the modified diagonal conjectured in \cite{OG3}.
\begin{corollary}\label{cor:modified diag}
Assuming Conjecture \ref{principal} holds, the modified diagonal cycle 
$$\Gamma^{m+1} (\M, \, c_{\M}) = \Delta - \Delta_c + \Delta_{c,c} - \cdots + \Delta_{c,c,\ldots, c}$$ vanishes in 
$CH_m (\M^{m+1}).$
\end{corollary}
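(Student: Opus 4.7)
The plan is to apply Theorem \ref{thm:applications} with $\alpha = 1$ (codimension $d=0$) and $\ell = m+1$, for which the inequality $d + \ell = m+1 > m = \dim \M$ is satisfied. This yields the vanishing \eqref{moddiag}, i.e.\
$$\overline{\Delta}_{01} \cdot \overline{\Delta}_{02} \cdots \overline{\Delta}_{0,m+1} = 0 \, \, \text{in} \, \, CH_m(\M \times \M^{m+1}).$$
The goal is then to compute the pushforward under the projection $p: \M \times \M^{m+1} \to \M^{m+1}$ forgetting the initial factor, and identify it with the modified diagonal.

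To carry this out, I would expand the product using the decomposition $\overline{\Delta}_{0i} = \Delta_{0i} - (c_\M)_i$, where $(c_\M)_i$ denotes the pullback of the distinguished point class $c_\M$ from the $i$-th factor of $\M^{m+1}$. Multilinearity gives
$$\prod_{i=1}^{m+1} \overline{\Delta}_{0i} \, = \, \sum_{S \subseteq \{1,\ldots,m+1\}} (-1)^{|S|} \prod_{i \in S}(c_\M)_i \cdot \prod_{j \notin S} \Delta_{0j},$$
so it suffices to understand the pushforward of each term indexed by $S$. The classes $(c_\M)_i$ are pulled back along $p$ and thus factor out of $p_{\star}$ by the projection formula. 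The remaining cycle $\prod_{j \notin S} \Delta_{0j}$ is the small diagonal on the factors indexed by $\{0\} \cup S^c$.

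Next, I would analyze the pushforward of this small diagonal. When $S^c \neq \emptyset$, the projection $p$ restricts to an isomorphism onto its image, and the pushforward is the small diagonal $\Delta_{S^c} \subset \M^{m+1}$ of the factors in $S^c$. When $S = \{1, \ldots, m+1\}$, however, the cycle in question is $\M \times \{(c_\M, \ldots, c_\M)\}$, whose image under $p$ is a single point; since $p$ collapses an $m$-dimensional cycle to a point, the pushforward of this $m$-dimensional class vanishes. Combining these computations, the term indexed by a subset $S$ of size $k \leq m$ contributes the cycle obtained by placing $c_\M$ at the positions in $S$ and the small diagonal on the remaining positions; summing over all $k$-subsets yields precisely $\Delta_{c,c,\ldots,c}$ (with $k$ copies of $c$).

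Assembling the contributions, the pushforward of the vanishing cycle equals
$$\sum_{k=0}^{m} (-1)^k \Delta_{\underbrace{c,\ldots,c}_{k}} \, = \, \Gamma^{m+1}(\M,\, c_\M),$$
and the corollary follows. There is no serious technical obstacle here once Theorem \ref{thm:applications} is granted; the only point that requires care is the dimension-based vanishing of the $S = \{1, \ldots, m+1\}$ term, which is what truncates the alternating sum at $k=m$ in agreement with the definition of the modified diagonal.
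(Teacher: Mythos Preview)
Your proposal is correct and follows exactly the paper's approach: the paper simply asserts after equation \eqref{moddiag} that the pushforward of $\overline{\Delta}_{01}\cdots\overline{\Delta}_{0,m+1}$ along the projection forgetting the first factor is O'Grady's modified diagonal cycle, and you have supplied precisely the expansion and pushforward computation that justifies this assertion. Your handling of the $S=\{1,\ldots,m+1\}$ term via the dimension drop is the right reason the alternating sum truncates at $k=m$.
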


It is natural to consider, for every codimension $0 \leq d \leq m,$ the increasing filtration $$\S_0 \subset \S_1 \subset \ldots \S_i \subset \ldots \subset CH^d (\M),$$ given by
 $$\S_i (CH^d (\M)) =  \{ \alpha \, \, \text{with} \, \, \alpha \cdot \overline \Delta_{01} \cdots \overline \Delta_{0, \,m-d+i+1} = 0 \subset CH_{\star} (\M \times \,\M^{m-d+i+1})\}.$$
Here $\alpha$ is pulled back to the product from the first factor. While the filtration does terminate (see \cite{V3}, Corollary 1.6 and Proposition 2.2), the conjectured vanishing \eqref{moddiag} would further establish
$$\S_d (CH^d (\M)) =  \{ \alpha  \, \, \text{with} \, \, \alpha \cdot \overline \Delta_{01} \cdots \overline \Delta_{0, \,m+1} = 0 \subset CH_{\star} (\M \times \,\M^{m+1})\} = CH^d (\M).$$
Aspects of the $\S_\bullet$ filtration are discussed below in Section \ref{split_filtration}. We only notice here fleetingly
that 
$$\S_0 (CH^d (\M)) = \{ \alpha \, \, \text{with} \, \, \alpha \cdot \overline \Delta_{01} \cdots \overline \Delta_{0, \,m-d+1} = 0 \subset CH_{\star} (\M \times \,\M^{m-d+1})\}.$$
Thus for every $d$, Conjecture \ref{principal} and Theorem \ref{thm:applications} would place all codimension $d$ tautological classes in $\S_0 (CH^d (\M)).$ It is also known (cf. \cite{SYZ}) that $\M$ admits Lagrangian constant-cycle subvarieties for the special cycle $c_\M$. 
We explain later, in Remark \ref{Lagrangian} of Section \ref{sec: taut}, that the spreading principle places these subvarieties in $\S_0 (CH^n (\M))$, for $n=m/2.$ We further conjecture there that Chow classes of Lagrangian constant-cycle subvarieties are in fact tautological.
%It should be interesting to investigate this filtration in all codimensions and compare it in top codimension with the filtrations on $CH_0 (\M)$ studied in \cite{V1}, \cite{SYZ}.

\vskip.1in

As evidence for Conjecture \ref{principal}, we show:

\begin{theorem}
\label{principal0} 
Let $X$ be a smooth projective $K3$ surface. Conjecture \ref{principal} holds for $\M = X^{[n]},$ the Hilbert scheme of $n$ points on $X$. 
\end{theorem}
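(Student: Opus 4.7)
I would proceed by induction on $n$, reducing Conjecture \ref{principal} for $X^{[n]}$ to the case of $X^{[n-1]}$ together with the Beauville-Voisin identity on $X$ itself. Throughout the argument, I write $\mathcal Z_n \subset X^{[n]} \times X$ for the universal subscheme and fix a length-$n$ subscheme $\xi_0 \subset X$ representing $c_{\M}$, so that $\overline \f = \mathcal I_{\mathcal Z_n} - \rho^{\star} \mathcal I_{\xi_0}$.

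For the base case $n=1$, we have $\M = X$ and $\f = \mathcal I_\Delta$ on $X \times X$, so $\overline\f = \mathcal I_\Delta - \mathcal I_{X \times c}$. Using $\ch(\mathcal I_Z) = 1 - \ch(\mathcal O_Z)$ together with Grothendieck-Riemann-Roch applied to the diagonal embedding $\Delta \hookrightarrow X \times X$, each $\ch_i(\overline\f)$ is a polynomial in $\overline\Delta = \Delta - X \times c$, in $c_2(T_X) = 24\, c_X$, and in classes pulled back from the two $X$-factors. The conjectured identity on $X \times X^\ell$ then decomposes into a sum of terms of the shape $\alpha \cdot \overline\Delta_{0, j_1} \cdots \overline\Delta_{0, j_r} \cdot \beta$, with $\beta$ a class pulled back from $X^\ell$ that lies in the Beauville-Voisin subring of each factor. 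Such terms vanish by iterated application of \eqref{bv0} and of its consequences \eqref{second}, \eqref{third}.

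For the inductive step I would invoke the nested Hilbert scheme $X^{[n-1,n]}$ parametrizing flags $\xi_{n-1} \subset \xi_n$, together with the forgetful projections $\pi_{+} : X^{[n-1,n]} \to X^{[n]}$ and $\pi_{-} : X^{[n-1,n]} \to X^{[n-1]}$ and the residual-point map $r : X^{[n-1,n]} \to X$ sending a flag to the single point $\xi_n \setminus \xi_{n-1}$. On $X^{[n-1,n]} \times X$ the two universal ideal sheaves sit in a short exact sequence of the form
$$0 \to \mathcal I_{\mathcal Z_n} \to \mathcal I_{\mathcal Z_{n-1}} \to (\Gamma_r)_{\star} L \to 0,$$
with $\Gamma_r \subset X^{[n-1,n]} \times X$ the graph of $r$ and $L$ a suitable line bundle on $\Gamma_r$, yielding in $K$-theory a relation
$$\overline\f_n \,=\, \pi_{-}^{\star}\, \overline\f_{n-1} \,+\, (\text{class supported on } \Gamma_r).$$
Since $\pi_{+}$ is birational onto $X^{[n]}$, it suffices to check the Chow identities after pullback to $X^{[n-1,n]}$. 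Expanding $\alpha \cdot \prod_j \ch_{i_j}(\overline\f_j)$ via the above relation yields, on the one hand, a principal term to which the induction hypothesis for $X^{[n-1]}$ applies, and on the other hand cross-terms localized to products of graphs $\Gamma_r$ which, by the projection formula, reduce to tautological identities on $X^{[n-1]} \times X^\ell$ multiplied by Beauville-Voisin classes pulled back from $X$ via $r$.

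The main obstacle is the combinatorial bookkeeping of the cross-terms. A product of $s$ of the $\Gamma_r$-supported correction classes couples the residual $X$-factor of $r$ to $s$ of the $X^\ell$-factors, producing an iterated sum of restricted classes. One must verify that, after rewriting each cross-term as a tautological class on $X^{[n-1]} \times X^{\ell'}$ for an appropriate $\ell' \leq \ell+1$ obtained by absorbing Beauville-Voisin factors into the tautological input, the hypothesis $d + \ell > 2n$ transforms into the analogous bound $d' + \ell' > 2(n-1)$ needed to apply the induction hypothesis. Matching codimensions on the two sides, so that the extra codimension picked up by the absorbed Beauville-Voisin classes on $X$ compensates exactly for the loss of an $X$-factor and a unit of dimension in passing from $X^{[n]}$ to $X^{[n-1]}$, is the delicate combinatorial step of the argument.
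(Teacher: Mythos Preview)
Your inductive framework via the nested Hilbert scheme is the right one, but the inductive step as you describe it has a genuine gap.

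First a minor correction: the map $\pi_+:X^{[n-1,n]}\to X^{[n]}$ is not birational but generically finite of degree $n$. This alone is harmless, since $\pi_+^\star$ is still injective with $\mathbb Q$-coefficients, but it is a symptom of the real issue.

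The real obstruction is the tautological line bundle $\mathcal L=\mathcal O_{\mathbb P(\mathcal I_{n-1})}(1)$ on $X^{[n-1,n]}$, with $\lambda=c_1(\mathcal L)$. Your ``suitable line bundle $L$'' on $\Gamma_r$ is the restriction of $\mathcal L$, and $\lambda$ is \emph{not} pulled back from $X^{[n-1]}\times X$ along $\sigma=\pi_-\times r$. When you expand $\pi_+^\star\gamma$, both $\pi_+^\star\alpha$ and the correction terms become polynomials in $\lambda$ whose coefficients are $\sigma^\star$ of tautological classes on $X^{[n-1]}\times X$. The $\lambda^j$-coefficient arising from $\pi_+^\star\alpha$ has codimension only $d-j$, and for $j>0$ the inequality $(d-j)+\ell>\dim(X^{[n-1]}\times X)=2n$ required by the induction hypothesis \emph{fails}. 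So the terms in the $\lambda$-expansion cannot be killed individually, and your projection-formula reduction does not close.

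The paper resolves this by not attempting to prove $\pi_+^\star\gamma=0$ on $X^{[n-1,n]}$. Instead it pushes forward along $\sigma$: since $\sigma_\star(\lambda^j)=(-1)^jc_j(-\mathcal I_{n-1})$ is tautological of codimension $j$, the missing codimension is restored after pushforward, and the induction hypothesis does apply to both $\sigma_\star\pi_+^\star\gamma$ and $\sigma_\star(\lambda\cdot\pi_+^\star\gamma)$. The cost is a separate lemma showing that the simultaneous vanishing of these two pushforwards forces $\gamma=0$. This lemma identifies the two operations with the Nakajima operators $\mathfrak q_{-1}$ and $[\delta,\mathfrak q_{-1}]$, and then invokes the de~Cataldo--Migliorini description of $CH_\star(X^{[n]})$ together with the Chow-level commutator relations of Maulik--Negu\cb{t} to see that $\mathfrak q_{-1}$ and $[\delta,\mathfrak q_{-1}]$ jointly detect all classes. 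This Heisenberg-algebra input is the essential idea missing from your proposal.

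There is also a secondary structural issue: the induction does not close over Conjecture~\ref{principal} as stated. Descending to $X^{[n-1]}\times X$ forces the tautological input to live on $X^{[n-1]}\times X^k$ for growing $k$, and the corrections $\mathcal L\otimes\mathcal O_{\overline\Delta}$ introduce factors of the form $\ch_i(\mathcal O_{\overline\Delta_{s,t}})$ in addition to the $\ch_i(\overline{\mathcal I})$. One must therefore strengthen the statement to a version on $X^{[n]}\times X^k$ allowing both kinds of factors (the paper's Conjecture~\ref{principal}$^*$) before the induction becomes self-sustaining; your bookkeeping of ``absorbing Beauville--Voisin factors'' gestures at this but does not formulate it.
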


A richer version of Theorem \ref{thm:applications} is proven in Section \ref{sec: taut} as Theorem \ref{thm:applications}$^{*}.$ 
\comment{Indeed the vanishings \eqref{mconseq} are shown to belong to a larger collection of Chow identities in the products $\M \times \cdots \times  \M$ implied by Conjecture \ref{principal}.}  
Section \ref{sec: taut} also discusses the tautological subring of the Chow ring in a broader context, for the products $\M \times X^{\ell}, \, \, \ell \geq 0.$ This leads to generalizations of Conjecture \ref{principal} and Theorem \ref{principal0} to the setting of a product $\M \times X^{\ell}$ which are needed in the inductive argument of Section \ref{sec: hilb scheme}.

Theorem \ref{principal0} is argued in Section \ref{sec: hilb scheme} inductively on the number of points, using the geometry of the nested Hilbert scheme. The inductive technique, introduced in \cite{EGL}, is well understood in the context of universality arguments for series of intersection-theoretic invariants on the Hilbert scheme of a surface. In Theorem \ref{principal0},  we pursue nevertheless {\it Chow} identities in a large range of codimensions. Beyond the standard elements of the \cite{EGL} mechanism, we accordingly make strong use of the overhaul, on the level of Chow groups, of the Nakajima-Lehn commutator identities previously known to hold in cohomology. This overhaul was recently completed in \cite{MN}. 
% Tautological classes and the natural products \eqref{main} are well-behaved through the induction. 
Finally, the base case of the induction beautifully comes down to the three fundamental $K3$ identities \eqref{bv}, \eqref{second} and \eqref{third}. 

Together, Theorems \ref{thm:applications} and \ref{principal0} establish unconditionally a hierarchy of Chow identities involving the full tautological ring of the Hilbert scheme of a $K3$ surface, for all codimensions. In this hierarchy, the ``codimension-zero'' identity given by Corollary \ref{cor:modified diag} is the vanishing of the modified diagonal cycle first established in \cite{V3} (conditional on the forthcoming \cite{V6}) which is thus placed in a vaster framework. The top codimension case is the assertion that the special cycle spans the tautological ring in dimension zero. 

In Section \ref{split_filtration} we discuss the $\S_\bullet$ filtration on all Chow groups introduced above. For zero-cycles, we compare it to the filtrations on $CH_0 (\M)$ studied in \cite{V1}, \cite{SYZ}. In higher dimensions, the placement of constant-cycle subvarieties within the filtration is investigated. 

\vskip.1in

In the context of the tautological ring (see Definition \ref{def: taut ring}) of the product $\M \times X^{\ell}$,  it is natural to formulate the stronger 
\begin{conjecture}
\label{bvinject}
The restriction of the cycle class map to the tautological subring, $$\tau: R_{\star} (\M \times X^{\ell}) \to H_{\star} (\M \times X^{\ell}),$$ is injective. 
\end{conjecture}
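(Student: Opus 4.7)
The plan is to attack this conjecture by reducing to known injectivity results via the Hilbert scheme inductive machinery of Section \ref{sec: hilb scheme}, exploiting the interplay between tautological classes and Nakajima--Lehn-type operators.

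First I would dispatch the base case $\M = X$, where the statement becomes the injectivity of $R_\star(X^{\ell+1}) \hookrightarrow H_\star(X^{\ell+1})$. The tautological subring is generated by K\"unneth pullbacks from the three-dimensional Beauville--Voisin ring $R_\star(X) = CH_2(X) + CH_1(X) + \mathbb{Q}\, c_X$. The classical Beauville--Voisin theorem gives injectivity for a single factor, and since pullbacks from distinct factors commute and the cycle class map is compatible with K\"unneth, a multilinear argument yields the conclusion.

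For $\M = X^{[n]}$ I would proceed by induction on $n$, using the nested Hilbert scheme correspondences, in parallel with the proof of Theorem \ref{principal0}. The Chow-level Nakajima--Lehn commutator identities of \cite{MN} should provide a presentation of $R_\star(X^{[n]} \times X^\ell)$ as spanned by Nakajima-type operators applied to a vacuum and decorated by Beauville--Voisin classes on $X$. These spanning elements would be indexed by the same combinatorial data (multipartitions decorated by BV classes) that indexes a Nakajima basis of $H_\star(X^{[n]} \times X^\ell)$, and the cycle class map respects this indexing. Injectivity of $\tau$ would then reduce to the faithfulness of the Nakajima action on the cohomological Fock space, which is classical. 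For a general moduli space $\M$ of stable sheaves on $X$, I would then transport the result from the Hilbert scheme case via derived equivalences or Bridgeland wall-crossing, as such equivalences carry tautological subrings to tautological subrings.

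The main obstacle is the Chow-level Fock space step: one must show that the Nakajima-type spanning set is linearly independent in $CH_\star$, not merely in $H_\star$. Equivalently, no ``accidental'' Chow-level relation among tautological classes may become trivial after the cycle class map. Establishing this seems to require a genuine strengthening of the \cite{MN} commutator identities, with Conjecture \ref{principal} (Theorem \ref{principal0} for the Hilbert scheme) serving as the key structural input. Indeed Conjecture \ref{bvinject} is in some sense a refinement of Conjecture \ref{principal}: the latter places all tautological classes in the zeroth piece $\S_0$ of the filtration from Corollary \ref{filtration}, while the former asserts further that the entire tautological ring is faithfully detected by cohomology.
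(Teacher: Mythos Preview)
The paper does not prove this statement: it is Conjecture \ref{bvinject}, explicitly presented as open and described as ``elusive'' in contrast to the more concrete Conjecture \ref{principal}. There is therefore no proof in the paper to compare your attempt against; your proposal is an attack on an open problem, and should be evaluated as such.

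Your outline has two genuine gaps. First, the base case is mis-stated. Per Definition \ref{def: taut ring}, the tautological ring $R_\star(X \times X^\ell)$ is \emph{not} generated merely by K\"unneth pullbacks of Beauville--Voisin classes; it also contains all diagonal classes $\rho_{rs}^\star \Delta$. Injectivity of the cycle class map on the subring of $CH_\star(X^{k+1})$ generated by divisors, special points, \emph{and diagonals} is itself a substantial statement (related to the finite-dimensionality of the Chow motive of $X$ and to multiplicative Chow--K\"unneth decompositions), and it does not fall out of classical Beauville--Voisin by a ``multilinear argument.''

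Second, and more seriously, the gap you yourself flag in the inductive step is not a technical loose end but essentially the entire content of the conjecture. Knowing that Nakajima operators applied to BV-decorated vacua \emph{span} $R_\star(X^{[n]} \times X^\ell)$, and that the analogous cohomology classes are independent, does not yield Chow-level injectivity: one still needs to rule out Chow relations invisible in cohomology, which is precisely what Conjecture \ref{bvinject} asserts. The commutator identities of \cite{MN} and Theorem \ref{principal0} establish specific tautological \emph{vanishings}; they do not furnish a mechanism for proving \emph{nonvanishing} of an arbitrary cohomologically nonzero tautological class. Your final paragraph correctly identifies this, but the upshot is that the proposal is a sketch of where the difficulty lies rather than a route around it.
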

This statement follows the line of conjectures on the injectivity of the cycle class map on suitable subrings of Chow initiated in \cite{beauville2}, \cite{V4}. It 
completely subsumes Conjecture \ref{principal}, our main object in this paper. Indeed the identities \eqref{main} are among tautological classes and hold in homology. The advantage of Conjecture \ref{principal} (and the equivalent statement given in Theorem \ref{thm:applications}) is that it proposes a {\it concrete} set of relations in the Chow ring, and is thus easier to come to grips with than the elusive Conjecture \ref{bvinject}. 

Finally, the interesting problem of understanding corrections of the identities \eqref{main} in a relative setting over the moduli space of polarized $K3$ surfaces is left for future exploration. 

\vskip.2in

{\it Addendum.}  We note that in hindsight Theorem \ref{principal0} may also be argued from the results of the contemporaneous paper \cite{noy}, which also relies on the Nakajima-Lehn identities established in \cite{MN}. As shown in \cite{noy}, the main operator $h$ which gives the Chow decomposition on $X^{[n]}$ is a derivation, and its action on the universal Chern character is explicitly determined. Together, these two ingredients can be shown to yield the vanishing of the product cycles of Conjecture \ref{principal} in the Hilbert scheme case.  

The ${\mathsf S}_\bullet$ filtration discussed here was also recently studied in \cite{vial}. For zero-cycles, \cite{vial} shows that it agrees with the filtration introduced in \cite{SYZ}, while only one inclusion is pointed out in the present article (cf. Lemma \ref{lem:FiltrationInclusion}). We thank Charles Vial for a useful correspondence on this topic.

\subsection*{Acknowledgements} We thank Mark de Cataldo, Lie Fu, Daniel Huybrechts, Eyal Markman, Davesh Maulik, Andrei Negu\cb{t}, Drago\cb{s} Oprea, Junliang Shen, Charles Vial, Qizheng Yin, and Ruxuan Zhang for helpful discussions and correspondence. Conversations with Andrei Negu\cb{t} regarding the Chow induction on number of points in the Hilbert scheme context played a particularly important role. I.B. is supported by the ERC Synergy Grant ERC-2020-SyG-854361-HyperK. L.F. was supported by the NSF through grant DMS 1803082. A.M. was supported by the NSF through grants DMS 1601605 and 1902310, as well as by the Radcliffe Institute for Advanced Study at Harvard, through a 2019-2020 Radcliffe Fellowship. R.S. was supported by the NSF through grant DMS 1645877. This project was initiated at Northeastern University under the auspices of the NSF-funded RTG grant DMS 1645877. 

\vskip.3in

\section{Tautological rings and product cycles }\label{sec: taut}

\vskip.3in

\subsection{Tautological rings}\label{sec: taut1}

Let $X$ be a smooth projective $K3$ surface, $v \in H^{\star} (X, \, \mathbb Z)$ a primitive Mukai vector, and let $\M$ be the moduli space of stable sheaves with Mukai vector $v$ on $X$ relative to a $v$-generic polarization. To ease the exposition, we make the nonrestrictive assumption that $\M$ admits a universal family $$\f \to \M \times X.$$ If this is not the case, we use instead the universal \textit{Chern character} $\text{ch} \, \mathcal F \in CH_{\star} (\M \times X)$ discussed in \cite{M}, \cite{markman2} to define the tautological ring, as well as to formulate the identities \eqref{main} of Conjecture \ref{principal}. The construction in Section 3 of \cite{M} and Section 3.1 of \cite{markman2} follows the original explanation of \cite{mukai2}, Appendix A.5. Specifically, a universal sheaf can be glued together over $\mathbb P \times X,$ where $\mathbb P \to \M$ is a  suitable projective bundle over $\M$. After appropriate twisting, the universal Chern character (but not the sheaf) is seen to descend as a rational class to the product $\M \times X$. When available, the universal sheaf is defined up to tensoring by line bundles from $\M$. If a universal sheaf is not available, the universal Chern character is correspondingly (cf. \cite{M}, Section 3) defined up to multiplication by the Chern character of a line bundle over the moduli space. We now recall from the introduction:
\begin{definition}\label{def: taut ring M}
The tautological ring $$R_{\star} (\M) \subset CH_{\star} (\M)$$ is the subring of Chow generated by the following classes
\begin{itemize}
\item $\pi_{\star} (M (c_i (\f))),$ \, $M$ any monomial in the Chern classes of $\f$;
\item $\pi_{\star} \left (M (c_i (\f)) \cdot \rho^{\star} D \right ),$ $D \in CH_1 (X), \, M$ any monomial in the Chern classes of $\f$;
\item $\pi_{\star}\left  (M (c_i (\f)) \cdot \rho^{\star} c_X \right ),$ $M$ any monomial in the Chern classes of $\f$.
\end{itemize}
\end{definition}

\noindent 
Thus $R_\star (\M)$ is generated by all classes of the form
$$\pi_\star (M(c_i (\f)) \cdot \rho^\star \beta ),$$ for an arbitrary monomial $M$ in the Chern classes of $\f$, and $\beta \in R_\star (X)$ any class in the Beauville-Voisin ring $$R_\star (X) = \mathbb Z \, c_X + CH_1 (X) + CH_2 (X) \subset CH_\star (X).$$
As we repeatedly consider Chern characters, we will work throughout with $\mathbb Q$ coefficients. 

It is useful to extend Definition \ref{def: taut ring M} to the arbitrary products $$\M \times X^{k}, \, \,\, \text{for} \, \, \, k \geq 0.$$ For $1 \leq s \leq k,$ let $\rho_s : \M \times X^k \to X$ be the map to the factor indexed by $s$, with the accompanying projection  $\bar \rho_s: \M \times X^k \to \M \times X.$
Denote by $$\f_s = \bar \rho_s^{\star} \f$$ the universal sheaf on 
$\M \times X^k$ pulled back from $\M \times X$ via the $s$th projection. 

\begin{definition}\label{def: taut ring}
The tautological system of rings $R_{\star} (\M \times X^k) \subset CH_{\star} (\M \times X^k), \, \, k \geq 0$ is the smallest system of $\mathbb Q$-algebras satisfying the following three properties:

\begin{enumerate} [label=(\roman*)]
\vskip.1in
\item
$R_{\star} (\M \times X^k)$ contains the Chern classes $c_i (\f_s), \, \, \, 1\leq s \leq k$, as well as the classes $\rho_s^*D$, for $D \in CH_1 (X).$

\vskip.1in

\item 
The system is closed under pushforward via the natural projections $\pi: \M \times X^n \to \M \times X^k$ forgetting factors of $X$, where $n \geq k.$ 

\vskip.1in

\item 
The system is closed under pushforward via the natural inclusions $\iota: \M \times X^n \to \M \times X^k$ for $n \leq k$ through diagonal embeddings of factors of $X$ or embeddings using the special cycle $c_X$. 
\end{enumerate}
\end{definition}

Concretely, this means that for each $k \geq 1,$ the subring $R_{\star} (\M \times X^k) \subset CH_{\star} (\M \times X^k)$ is generated by the following classes:

\begin{itemize}
\vskip.1in
\item the pullbacks $\pi^{\star} \alpha$ from $\M$ to the product $\M \times X^k$, where $\alpha \in CH_{\star}( \M)$ is tautological in the sense of Definition 1.  
\vskip.1in
\item the Chern classes $c_i (\f_s), \, \, \, 1\leq s \leq k$;
\vskip.1in
\item the pullback classes $\rho_s^{\star} D, \, \, \, \rho_s^{\star} c_X, \, \, 1\leq s \leq k$ ;
\vskip.1in
\item the diagonal classes $\rho_{rs}^{\star} \Delta, \, \, \, 1 \leq r,s \leq k.$
\end{itemize}

It is indeed straightforward to check that each of the classes above is in $R_\star(\M\times X^k),$ and that any polynomial in these classes satisfies the three properties in Definition \ref{def: taut ring}. Thus they generate $R_\star(\M\times X^k).$

\vskip.1in

We will further let $$R_* (X^k) \subset CH_\star (X^k)$$ be the {\it Beauville-Voisin subring} generated by the pullback classes $\rho_s^{\star} D, \, \, \, \rho_s^{\star} c_X, \, \, 1\leq s \leq k,$ and the diagonal classes $\rho_{rs}^{\star} \Delta, \, \, \, 1 \leq r,s \leq k.$

\begin{remark}
It is interesting to examine the independence of $R_\star (\M)$ of the modular interpretation of the holomorphic symplectic manifold $\M$. Suppose $$\M = \M_v \simeq \M_{v'},$$ where $\M_{v'}$ is a moduli space of stable sheaves with Mukai vector $v'$ relative to a polarization $H'$ over a $K3$ surface $X'$. There is a derived (anti-)equivalence \cite{bayermacri} $$\Phi: D^b (X) \simeq D^b (X')$$ with kernel $\e \in D^b (X \times X')$ inducing the isomorphism $$\overline \Phi: \M_v \to \M_{v'}.$$ Let $\f \to \M_v \times X, \, \f' \to \M_{v'} \times X'$ be the universal objects. Considering the extended equivalence induced by $\e$,  $$\Phi_\M: D^b (\M_v \times X) \simeq D^b (\M_{v} \times X'),$$ and using the identification $\overline \Phi$, we 
have, up to tensoring by a line bundle from $\M$,  
$$ \Phi_{\M} (\f) = \f' \, \, \, \text{on} \,\, \,  \M \times X'.$$
For $k \geq 1$, let now $\pi: \M \times X^k \to \M,\, \pi': \M \times (X')^k \to \M $ be the projections, and let 
$$\zeta = \pi'_\star \left (\text{ch}_{i_1} \f'_1\,  \cdots \, \text{ch}_{i_k} \f'_k  \cdot \beta' \right ), \, \, \, \beta \in R_\star ((X')^k),$$ be a class on 
 $\M$ tautological in the sense of $v'$.  As $$\text{ch} \, \f' = \text{ch} \, \Phi_{\M} (\f) = (\pi \times \text{id}_{X'})_\star \left ( \text{ch} \, \f \cdot \text{ch} \, \e \cdot \text{td} \, X \right ),$$ 
it is standard to write $\zeta$ as a linear combination of terms of type
\begin{equation}
\label{BVinvariance}
\pi_\star \left (\text{ch}_{j_1} \f_1\,  \cdots \, \text{ch}_{j_k} \f_k  \cdot \beta \right ), \, \, \, \beta \in CH_\star (X^k).
\end{equation}
If in each summand we had $\beta \in R_\star (X^k)$, the class $
\zeta$ would be tautological in the sense of $v$ as well. It is well known now by a theorem of Huybrechts (cf. \cite{huybrechts1}) that the equivalence $\Phi$ preserves the Beauville-Voisin ring on the Chow level. If in turn, for any $k \geq 2$, the derived equivalence $$\Phi^k: D^b (X^k) \to D^b ((X')^k) \, \, \, \text{with kernel} \, \, \,  \mathcal E^{\boxtimes k} \to (X\times X')^k$$ sent $R_* (X^k)$ to $R_\star ((X')^k)$ on the Chow level, then the classes $\beta$ occurring in \eqref{BVinvariance} would indeed be tautological. Relatedly, this raises the interesting question whether tautological ring invariance holds for any pair of derived-equivalent moduli spaces of sheaves over K3 surfaces. This would be a natural generalization of Huybrechts's theorem in dimension two. It would be worthwhile to investigate this circle of ideas further.

{\comment{\begin{remark}
We note that the tautological ring $R_\star (\M)$ is independent of the modular interpretation of the holomorphic symplectic manifold $\M$. Suppose that $$\M = \M_v \simeq \M_{v'},$$ where $\M_{v'}$ is a moduli space of stable sheaves with Mukai vector $v'$ relative to a polarization $H'$ over a $K3$ surface $X'$. There is then a derived (anti-)equivalence $$\Phi: D^b (X) \simeq D^b (X')$$ with kernel $\e \in D^b (X \times X')$ inducing the isomorphism $$\overline \Phi: \M_v \to \M_{v'}.$$ Let $\f \to \M_v \times X, \, \f' \to \M_{v'} \times X'$ be the universal objects, and \textcolor{purple}{$\pi: \M_v \times X \to \M_v,\, \pi': \M_{v'} \times X' \to \M_{v'}$ the projections}. 
Considering the extended equivalence induced by $\e$,  \textcolor{purple}{$$\Phi_\M: D^b (\M_v \times X) \simeq D^b (\M_{v} \times X'),$$} we 
have $$\f_0 =_{\text{def}} \Phi_{\M} (\f) = (\overline \Phi \times \text{id}_{X'} )^\star (\f').$$ 
Let \textcolor{purple}{$\pi'_\star \left (P (c_i (\f'))\cdot  \beta' \right )$} be a class on $\M$ tautological in the sense of $v'$. Here $P$ is a polynomial in the Chern classes of the universal sheaf $\f' \to \M_{v'} \times X'$ and $\beta' \in R_\star (X')$ is in the Beauville-Voisin ring of $X'.$

Under the identification $\overline \Phi$, we have \textcolor{purple}{$$\pi'_\star \left (P (c_i (\f'))\cdot  \beta' \right ) = \pi'_\star \left (P (c_i (\f_0))\cdot  \beta' \right ).$$ }
As $$\text{ch} \, \f_0 = \text{ch} \, \Phi_{\M} (\f) = (\pi \times \text{id}_{X'})_\star \left ( \text{ch} \, \f \cdot \text{ch} \, \e \cdot \text{td} \, X \right ),$$ it is standard to write the 
pushforward \textcolor{purple}{$\pi'_\star \left (P (c_i (\f_0))\cdot  \beta' \right )$} from $\M \times X'$ as a pushforward $\pi_\star \left (Q (c_i (\f)\cdot  \beta \right )$ from $\M \times X$, for some polynomial $Q$ in the Chern classes of the universal object $\f$ and a class $\beta \in CH_\star (X).$ Importantly, $\beta$ is in fact in the Beauville-Voisin subring $R_\star (X) \subset CH_\star (X)$: as \textcolor{purple}{established by \cite{huybrechts1} and \cite{V5}}, derived equivalence preserves Beauville-Voisin rings. 
Thus a tautological class in the sense of $\M_{v'}$ is also tautological in the sense of $\M_v$.
\qed
\end{remark}}}

\end{remark}
\vskip.1in

\subsection{Examples of tautological classes}
%We now establish the principal examples of %tautological classes which will be of relevance in %what follows. 

\subsubsection{Divisors} It is well-known (cf. \cite{mukai1} , \cite{mukai2}, \cite{OG1}, \cite{Y})  that the determinant line bundle homomorphism  
$$\Theta_v : v^{\perp} \to NS (\M)$$ is an isomorphism for $\langle v, \, v \rangle >0$, and is in all cases surjective. Here $$v^{\perp} \subset H^{\star}_{alg} (X, \, \mathbb Z)$$ denotes the orthogonal complement of the Mukai vector $v$ in the algebraic Mukai lattice $$H^{\star}_{alg} (X, \, \mathbb Z)=H^0(X,\mathbb{Z})\oplus NS(X) \oplus H^4(X,\mathbb{Z})$$ equipped with the Mukai pairing $$\langle(r,c,s),(r',c',s')\rangle=cc'-rs'-sr'\in \mathbb{Z}.$$ Divisors on $\M$ are thus tautological, making Definition \ref{def: taut ring M} independent of the choice of universal family/universal Chern character.

\subsubsection{Chern classes of the tangent bundle} We have $$\text{ch} \, (T\M) = 2 - \text{ch} \, \mathcal Ext_{\pi}^{\bullet} (\mathcal F, \, \mathcal F) = 2 - \pi_\star (\text{ch} \,\f^{\vee} \cdot \text{ch}\, \f \cdot \rho^\star \text{td}\, X),$$ therefore the Chern classes $c_i (T\M)$ are tautological.

\subsubsection{The special cycle $c_\M$}
We show now that the distinguished zero-cycle $c_{\M}$ is tautological.  
Consider the product $\M \times \M \times X$, equipped with the universal sheaves $\mathcal E, \, \mathcal F$ which correspond to the two copies of $\M$, 
and are pulled back to the product. Let $\pi: \M \times \M \times X \to \M \times \M$ be the projection. We form
the natural relative Ext complex (shifted for convenience),
$$\mathsf W (\mathcal E, \mathcal F) = \ext_{\pi}^{\bullet} (\mathcal E, \, \mathcal F)  [1] \, \, \, \text{on} \, \, \, \M \times \M.$$
We further fix $F_0 \to X$ a sheaf parameterized by $\M$, and denote by
$$\mathsf W (\mathcal E, \, \rho^{\star} F_0) = \ext_{\pi}^{\bullet} (\mathcal E, \,  \rho^{\star} F_0)  [1] \, \, \, \text{on} \, \, \, \M,$$
the pullback of $\mathsf W (\e, \, \f)$ under the inclusion $\M \times [F_0] \hookrightarrow \M \times \M.$

As observed in \cite{MZ}, the complex $\mathsf W$ plays a role in understanding Chow classes of points on $\M$, since the middle Chern class of $\mathsf W$ is the 
class of the diagonal in the product $\M \times \M$. The formula
\begin{equation}
\label{diag}
c_m \left (\mathsf W (\mathcal E, \mathcal F)\right ) = [\Delta] \, \, \, \text{in} \, \, \, CH_m (\M \times \M)
\end{equation}
was established in 
\cite{M}, and is aligned with earlier work of Beauville \cite{B} and Ellingsrud-Str{\o}mme \cite{ES} on representing the diagonal in terms of the 
universal Chern classes, in the context of moduli spaces of Gieseker-stable sheaves. 
By pullback, the diagonal formula \eqref{diag} gives 
\begin{equation}
c_m \left ( \mathsf W (\mathcal E, \, \rho^{\star} F_0 )\right ) = [F_0] \, \, \, \text{in} \, \, \, CH_0 (\M),
\end{equation}
and by Grothendieck-Riemann-Roch we have
$$\text{ch} \,  \left ( \mathsf W (\mathcal E, \, \rho^{\star} F_0 ) \right ) = -  \pi_{\star} \left [ \text{ch} \, \e^{\vee} \cdot \rho^{\star} (\text{ch} \, F_0 \cdot (1 + 2 c_X))\right].$$
In particular, if $F\in\M$ is any sheaf such that $c_2 (F)\in CH_0(X)$ is a multiple of $c_X$,
then
\begin{equation} 
c_{\M} = [F] = c_m  \left ( \mathsf W (\mathcal E, \, \rho^{\star} F) \right )
\end{equation}
is manifestly tautological. 

\vskip.1in

\begin{remark}\label{Lagrangian}
As shown in \cite{SYZ}, there exist Lagrangian constant-cycle subvarieties for the special cycle $c_\M$. For any such irreducible subvariety $V \subset \M$ of dimension $n = m/2$, consider the product $V \times M^{n+1}$. The cycles $$\overline \Delta_{01}, \ldots, \overline \Delta_{0,n+1}\,  \in \, CH^m (V \times \M^{n+1})$$ restrict to zero on the fibers of the projection $V \times \M^{n+1} \to V.$ By Theorem 3.1 of \cite{V2}, each of these cycles is supported on the inverse image of a proper closed subset of $V$. Since $V$ has dimension $n$, this ``spreading principle'' implies the vanishing
$$[V] \cdot \overline \Delta_{01} \cdot \overline \Delta_{02} \cdots \overline \Delta_{0,n+1} = 0 \, \, \text{in} \, \, CH_\star (\M \times \M^{n+1}).$$
We now conjecture
\begin{conjecture}
The class of any Lagrangian constant-cycle subvariety for $c_\M$ is in the tautological ring $R_\star (\M).$
\end{conjecture}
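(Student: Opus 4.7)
The plan is to show $[V]\in R_n(\M)$ by extracting $[V]$ from the diagonal representation \eqref{diag} and then replacing the restricted universal family on $\M\times V$ by a tautological surrogate. The constant-cycle property of $V$ is the essential input that should make this substitution possible.

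First I would locate $[V]$ inside the conjectural filtration of Corollary \ref{filtration}. The spreading principle gives $[V]\cdot \overline\Delta_{01}\cdots \overline\Delta_{0,n+1}=0$, so assuming Conjecture \ref{principal}, $[V]\in \S_0(CH^n(\M))$. The goal is then to produce a tautological representative of $[V]$ in this bottom piece, making the conjecture a partial converse to the statement that tautological classes lie in $\S_0$.

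Next I would use the diagonal formula $c_m(\mathsf W(\e,\f))=[\Delta]$ in $CH_m(\M\times\M)$. Intersecting with $\M\times V$ and pushing forward under the first projection $p_1\colon \M\times\M\to\M$ recovers $[V]$. The resulting expression is a polynomial in Chern classes of the relative Ext complex on $\M\times V$, which by Grothendieck-Riemann-Roch unfolds to a polynomial in Chern classes of $\e$ and of $\f|_{\M\times V}$ paired with Beauville-Voisin classes on $X$. If one can argue that $\text{ch}(\f|_{\M\times V})$ agrees with $\rho^\star\text{ch}(F)$ modulo tautological data on $\M\times V$ (for a fixed $F\in\M$ with $c_2(F)=k\,c_X$), substitution into the diagonal formula would express $[V]$ tautologically on $\M$.

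The key step, and the main obstacle, is this global replacement. The constant-cycle property of $V$ is a \emph{pointwise} equality of Chow classes of sheaves on $X$, and upgrading it to an equality of the family $\f|_{\M\times V}$ with $\rho^\star F$ modulo tautological corrections over $V$ is not automatic. A spreading-type argument in the spirit of \cite[Theorem 3.1]{V2}, combined with the Lagrangian condition (which controls the infinitesimal variation of Chow classes normal to $V$), is the natural tool; line-bundle twists arising in the process are tautological by the discussion of divisors above. As a sanity check I would first treat $\M=X^{[n]}$, where all currently known constant-cycle Lagrangians arise from explicit geometric constructions on the underlying $K3$ (Voisin's elliptic-fiber and rational-curve families) whose classes can be written directly via Nakajima-type operators and verified to be tautological, before attempting the general spreading argument.
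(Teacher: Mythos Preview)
This statement is presented in the paper as an open \emph{conjecture}; the paper offers no proof and no partial results toward it beyond the spreading-principle vanishing $[V]\cdot\overline\Delta_{01}\cdots\overline\Delta_{0,n+1}=0$ already recorded in the surrounding remark. There is thus no proof in the paper to compare your proposal against.

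Your proposal is a research outline rather than a proof, and the gap you yourself flag is precisely the heart of the matter. The constant-cycle hypothesis says only that $c_2(F_v)=k\,c_X$ in $CH_0(X)$ for each closed point $v\in V$; it gives no control over $\ch(\f|_{V\times X})$ as a class in $CH_\star(V\times X)$, and the spreading principle of \cite[Theorem~3.1]{V2} does not promote the pointwise statement to such a family-level identity. Indeed, a literal replacement of $\f|_{V\times X}$ by the constant family $\rho^\star F$ would turn $c_m(\mathsf W(\e,\f))|_{\M\times V}$ into $p_1^\star c_\M$, whose push-forward along $p_1$ is zero rather than $[V]$; so the ``tautological corrections over $V$'' you invoke would have to carry the entire class $[V]$. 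But ``tautological over $V$'' has no meaning in the paper's framework---the tautological system is defined only on $\M\times X^k$---and you give no mechanism by which integrating such correction terms over $V$ lands in $R_\star(\M)$. The Lagrangian condition constrains the symplectic form along $V$, not the Chow-theoretic variation of $\f$ normal to $V$, so it does not obviously help here. Your Hilbert-scheme sanity check is a reasonable experiment, but confirming the statement for a few explicitly constructed Lagrangians would not supply the missing family-level argument in general.
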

\end{remark}

\vskip.2in

\subsection{Vanishing of product cycles}\label{sec:Diagonals}

We now show that the system of product identities of Conjecture 1 leads in turn to a large collection of conjectural Chow vanishings in the self-products $\M \times \M \times \cdots \times \M.$ Among them is the vanishing of O'Grady's modified diagonal cycle. Aligned with our point of view, the modified diagonal cycle is also cast here in product form.

To start, let us single out the complex
\begin{equation}
\mathsf W (\mathcal E, \overline \f) = \ext_{\pi}^{\bullet} (\mathcal E, \, \mathcal F - \rho^{\star} F)  [1] \, \, \, \text{on} \, \, \, \M \times \M,
\end{equation}
where $F$ represents the special zero-cycle, $[F] = c_{\M},$ and $\f, \, \e$ are the universal objects on the first and second factors respectively. 

\comment{The building blocks for its Chern classes are, by Grothendieck-Riemann-Roch, the cycles
$$\alpha_{ij} = \pi_{\star} \left [{\text ch}_i \mathcal E^{\vee} \cdot ({\text ch}_j \overline \f) \cdot \text{td} X \right ].$$}

\noindent
We also set $$\overline \Delta = \Delta - \M \times c_{\M}  = c_m \left (\mathsf W (\mathcal E, \f) \right ) - c_m \left (\mathsf W( \e, \, \rho^{\star} F) \right ) \, \, \, \text{in} \, \, \, CH_m (\M \times \M ).$$ 
Further, in the context of a product $\M \times \,\M^{\ell} \times X$, we let $\e_1, \ldots, \e_{\ell},\,  \f $ be the universal sheaves corresponding to the last $\ell$ factors of $\M$, respectively the first distinguished factor. We label this factor by $0$, and
show 

\medskip

\begin{thmn}[\ref{thm:applications}$^*$]
For any class $\alpha \in CH_\star (\M)$ of codimension $d$ satisfying the inequality $$ d+ \ell > \dim \M,$$
the following three vanishing statements are equivalent.
\begin{enumerate} [label=(\roman*)]
\vskip.2in
\item $\alpha \cdot ch_{i_1} (\overline \f_1) \cdots ch_{i_\ell} (\overline \f_{\ell}) = 0 \in CH_{\star} (\M \times X^{\ell}),$ for all $i_1, \ldots, i_\ell \geq 0.$ Here $\overline \f_s, \, 1 \leq s \leq \ell,$ is the normalized universal sheaf pulled back from $\M$ and the $s$th factor in the product $X^\ell$. 

\vskip.1in
\item $\alpha \cdot c_{i_1} (\mathsf W(\e_1, \overline \f )) \cdot c_{i_2} (\mathsf W (\e_2, \overline \f)) \cdot \cdots \cdot c_{i_{\ell}}(\mathsf W (\e_{\ell}, \overline \f) ) = 0 \in CH_\star \left (\M \times \,\M^{\ell} \right ),$ for all 
$i_1, \ldots, i_\ell >0$. Here the complex $\mathsf W(\e_s, \overline \f ), \, 1\leq s \leq \ell,$ is pulled back from the distinguished factor $\M$ and the $s$th factor in the product $\M^\ell.$
\vskip.1in
\item $\alpha \cdot \overline \Delta_{01} \cdots \overline \Delta_{0, \ell} = 0 \in CH_\star \left (\M \times \,\M^{\ell} \right ),$\label{fromintro}
\end{enumerate}

\vskip.1in
In all three cases, the class $\alpha$ is pulled back to the product from the first distinguished factor $\M$. 
\end{thmn}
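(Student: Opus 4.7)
The plan is to prove the cycle $(\text{i}) \Rightarrow (\text{ii}) \Rightarrow (\text{iii}) \Rightarrow (\text{i})$, using Grothendieck-Riemann-Roch, the Newton-Girard identities, and the K-theoretic additivity $\mathsf{W}(\e_s, \overline{\f}) = \mathsf{W}(\e_s, \f) - \mathsf{W}(\e_s, \rho^{\star} F)$ in the second argument of the relative Ext complex. For $(\text{i}) \Rightarrow (\text{ii})$, I would apply GRR to the projection $\pi_s : \M_0 \times \M_s \times X_s \to \M_0 \times \M_s$ to get
\[
\text{ch}\bigl(\mathsf{W}(\e_s, \overline{\f})\bigr) \,=\, -\,(\pi_s)_\star \bigl[\text{ch}(\e_s^{\vee}) \cdot \text{ch}(\overline{\f}_s) \cdot \text{td}(X_s)\bigr],
\]
expressing each Chern-character component of $\mathsf{W}(\e_s, \overline{\f})$ as a pushforward linear in $\text{ch}(\overline{\f}_s)$. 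Pulling (i) back from $\M_0 \times X^{\ell}$ to $\M_0 \times \M^{\ell} \times X^{\ell}$, multiplying by $\prod_s \text{ch}(\e_s^{\vee}) \cdot \text{td}(X_s)$, and pushing forward along $X^{\ell}$ yields $\alpha \cdot \prod_s \text{ch}_{k_s}(\mathsf{W}(\e_s, \overline{\f})) = 0$ for all $k_s \geq 0$. The Chern-class form in (ii) then follows from Newton-Girard, writing each $c_{j_s}(\mathsf{W}(\e_s, \overline{\f}))$ as a polynomial in its own characters; the resulting products correspond, via the same GRR pushforward, to products $\prod_{s,r} \text{ch}(\overline{\f}_{s,r})$ on $\M_0 \times X^{\ell'}$ for some $\ell' \geq \ell$, still covered by hypothesis (i) since $d + \ell' > \dim \M$.

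For $(\text{ii}) \Rightarrow (\text{iii})$, multiplicativity of the total Chern class combined with the K-theoretic splitting gives, in degree $m$,
\[
\Delta_{0s} \,=\, c_m\bigl(\mathsf{W}(\e_s, \f)\bigr) \,=\, \sum_{i+j=m} c_i\bigl(\mathsf{W}(\e_s, \overline{\f})\bigr) \cdot c_j\bigl(\mathsf{W}(\e_s, \rho^{\star} F)\bigr).
\]
Separating off the $i = 0$ summand $c_m(\mathsf{W}(\e_s, \rho^{\star} F)) = \M_0 \times c_{\M, s}$ yields the triangular expansion
\[
\overline{\Delta}_{0s} \,=\, \sum_{0 < i \leq m} c_i\bigl(\mathsf{W}(\e_s, \overline{\f})\bigr) \cdot c_{m-i}\bigl(\mathsf{W}(\e_s, \rho^{\star} F)\bigr).
\]
Expanding $\alpha \cdot \prod_s \overline{\Delta}_{0s}$ via this identity writes it as a linear combination of terms $\alpha \cdot \prod_s c_{i_s}(\mathsf{W}(\e_s, \overline{\f}))$ with all $i_s > 0$, times classes pulled back from $\M^{\ell}$ (hence independent of $\M_0$); each such term vanishes by (ii), giving (iii).

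Finally, $(\text{iii}) \Rightarrow (\text{i})$ is the main obstacle. The strategy is to invert both of the previous transformations: first, invert the triangular expansion above (the top Chern class $c_m(\mathsf{W}(\e_s, \overline{\f}))$ appears with unit coefficient), using also vanishings obtained by replacing $\alpha$ with products $\alpha \cdot \gamma$ for auxiliary tautological $\gamma$, to extract the full family of Chern-class identities in (ii); then invert the GRR pushforward of the first step, exploiting that the universal sheaf $\f$ serves as a Fourier-Mukai kernel inducing, on a $v$-generic moduli space, an invertible correspondence between Chern characters on $\M \times X$ and those on $\M \times \M$. The hard part is carrying out this inversion purely in the Chow ring (rather than in cohomology alone); the codimension hypothesis $d + \ell > \dim \M$ plays an essential role in ensuring the relevant targets lie within the range of validity of the inversion.
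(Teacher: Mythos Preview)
Your arguments for $(i)\Rightarrow(ii)$ and $(ii)\Rightarrow(iii)$ are correct and essentially match the paper's proof.

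The genuine gap is in $(iii)\Rightarrow(i)$. Your proposed route --- first inverting the triangular Chern-class expansion to recover $(ii)$ from $(iii)$, then inverting the GRR pushforward via a Fourier--Mukai correspondence to recover $(i)$ --- does not work as written. The second inversion rests on the claim that the universal sheaf $\f$ induces an \emph{invertible} correspondence between Chow classes on $\M\times X$ and on $\M\times\M$; but for $\dim\M>2$ the Fourier--Mukai functor with kernel $\f$ cannot be an equivalence for dimension reasons, and there is no reason the associated Chow map should be invertible in the direction you need. The first inversion is also problematic: from the single identity $(iii)$ you propose to extract the whole family $(ii)$, which would require a linear-independence statement for the classes $c_{m-i}(\mathsf W(\e_s,\rho^\star F))$ on $\M_s$ that you have not supplied. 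The suggestion to replace $\alpha$ by $\alpha\cdot\gamma$ for auxiliary tautological $\gamma$ is unavailable here, since $\alpha$ is an arbitrary Chow class in this theorem, not assumed tautological.

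The paper's argument for $(iii)\Rightarrow(i)$ is direct and requires no inversion at all. One pulls $(iii)$ back to $\M\times\M^\ell\times X^\ell$, multiplies by $\prod_s\ch_{i_s}(\f_s)$ where $\f_s$ is the universal sheaf on the factor $\M_s\times X_s$ (the $s$th copy of $\M$ in $\M^\ell$, not the distinguished $\M_0$), and pushes forward along the projection forgetting $\M^\ell$. Each $\overline\Delta_{0s}=\Delta_{0s}-\M_0\times c_{\M}$ then acts as a correspondence: integrating $\Delta_{0s}\cdot\ch_{i_s}(\f_s)$ over $\M_s$ identifies $\M_s$ with $\M_0$ and returns $\ch_{i_s}(\f)$ on $\M_0\times X_s$, while integrating $c_{\M}\cdot\ch_{i_s}(\f_s)$ returns $\ch_{i_s}(\rho^\star F)$. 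The difference is exactly $\ch_{i_s}(\overline\f_s)$, and the product over $s$ gives $(i)$. The point you missed is that the diagonal itself is already the invertible correspondence you want; no Fourier--Mukai input is needed.
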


\vskip.1in

\begin{remark}
Note that $(i)$ is the vanishing predicted by Conjecture \ref{principal} in case $\alpha$ is tautological. Theorem \ref{thm:applications} is therefore the equivalence of $(i)$ and $(iii)$ for $\alpha \in R_\star (\M)$ and is completely subsumed by the statement above. The vanishing of the modified diagonal cycle, corresponding to $\alpha = 1$, is thus implied by Conjecture \ref{principal}.
\end{remark}

{\noindent \it Proof.} 
We show first that $(i)$ implies $(ii)$. To start, we note that for the complex $$\mathsf W (\mathcal E, \overline \f)\, \, \, \text{on} \, \, \,  \M \times \M,$$ each Chern class $c_k \left (\mathsf W (\mathcal E, \overline \f) \right )$ for $k> 0$ is expressible in terms of pure-degree pieces of the Chern character, and is therefore a sum of products of factors of type  
$$\alpha_{ij} = \pi_{\star} \left [{\text ch}_i \mathcal E^{\vee} \cdot {\text ch}_j \overline \f \cdot \text{td} X \right ] \, \, \, \text{and} \, \, \, \beta_{ij} = \pi_{\star} \left [{\text ch}_i \mathcal E^{\vee} \cdot {\text ch}_j \overline \f \right ].$$
\comment{The same is true for the differences 
$$c_k  \left (\mathsf W (\mathcal E, \f) \right ) - c_k \left (\mathsf W( \e, \, \rho^{\star} F) \right ) =  c_k \left (\mathsf W (\mathcal E, \, \rho^{\star} F + \overline \f) \right ) - c_k \left (\mathsf W( \e, \, \rho^{\star} F) \right ), \, \, \, \text{for} \, \, k >0.$$}
We consider now the larger product 
$$\M \times \M^{\ell}\times X^{\ell}$$
along with a class $\alpha \in CH_{\star} (\M)$ pulled back from the distinguished first factor $\M$,
satisfying $$\text{codim} \, \alpha + \ell > \dim \M.$$
We let $\mathcal F_1, \ldots \mathcal F_{\ell}$ be the universal sheaves pulled back from $\M \times X^{\ell}$, where $\M$ is the distinguished first factor. We also consider the universal sheaves $\e_1, \ldots \e_{\ell}$ on the new factors of $\M$ each paired with a factor of $X$. 

By $(i)$, the vanishing 
$$\alpha \cdot {\text ch}_{j_1} \overline \f_1 \cdots {\text{ch}}_{j_\ell} \overline \f_{\ell} = 0 $$ holds in $CH_{\star} (\M \times \M^{\ell} \times X^\ell)$, pulled back from 
$\M \times X^{\ell}.$
This trivially implies the vanishing of the larger product
$$\alpha\, \cdot \, {\text ch}_{i_1} \e_1^\vee \cdot  {\text ch}_{j_1} \overline \f_1 \cdot  {(\text{td}}\, X_1)^{a_1} \, \cdots  \,{\text ch}_{i_\ell} \e_\ell^\vee \cdot  {\text ch}_{j_\ell} \overline \f_\ell \cdot  {(\text{td}}\, X_\ell)^{a_\ell}  = 0$$ in $CH_{\star} (\M \times \M^{\ell} \times X^\ell),$
for $i_1, j_1, \ldots i_{\ell}, j_{\ell} \geq 0.$ Here the exponents $a_1, \ldots, a_\ell$ are either $0$ or $1$. Pushing forward via the projection $\M \times \M^{\ell}\times X^{\ell} \to \M \times \M^{\ell}$ gives 

%$${\text{ch}}_{i_\ell} \e_{\ell} \cdot  {\text ch}_{j_1} \overline \f_1 \cdots {\text{ch}}_{j_\ell} \overline \f_{\ell} \cdot {\text{td}}\, X_1\cdots {\text{td}}\, X_{\ell} = 0 $$ in $CH_{\star} (\M^{\ell} \times \M \times X^\ell)$.

\begin{lemma}
Consider the product $\M \times \, \M^{\ell} \, \times X$ and a cycle $\alpha$ on the distinguished factor $\M$, satisfying $\text{codim} \, \alpha + \ell > \dim \M.$ Denote by $\e_1, \ldots \e_{\ell}, \f$ the universal sheaves associated with the last $\ell $ copies of
$\M$, respectively the first one, pulled back to the product $\M \times\,  \M^{\ell} \times \, X. $
Let $\pi: \M \times \M^{\ell} \times X \to \M \times \M^{\ell}$ be the projection. Then
\begin{equation}
\label{basiccor}
\alpha \cdot {\prod}_{k=1}^{\ell} \pi_{\star} \left [ {\text ch}_{i_k} \mathcal E^{\vee}_k \cdot {\text ch}_{j_k} \overline \f  \cdot ({\text{td}} \,X)^{a_k} \right ]
= 0 \, \, \text{in} \, \, CH_{\star} (\M \times \, \M^\ell) ,
\end{equation}
for any  $i_k, j_k \geq 0,$ and $a_k$ taken either $0$ or $1$.
\end{lemma}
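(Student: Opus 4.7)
The lemma is essentially a repackaging of hypothesis $(i)$ after a pullback/pushforward manipulation. The paragraph immediately preceding the lemma already sketches most of the argument, so the plan is to formalize it. I will work on the larger product
$$Y = \M \times \M^\ell \times X^\ell,$$
where the $k$-th copy of $\M$ in $\M^\ell$ is paired with the $k$-th copy of $X$, and will write $\f_k$ for the pullback of $\f$ from $\M_0 \times X_k$, and $\e_k$ for the pullback of the universal sheaf from $\M_k \times X_k$.

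First I would invoke hypothesis $(i)$ on $\M \times X^\ell$, which gives
$$\alpha \cdot \operatorname{ch}_{j_1}(\overline{\f}_1) \cdots \operatorname{ch}_{j_\ell}(\overline{\f}_\ell) = 0 \quad\text{in } CH_\star(\M \times X^\ell)$$
for all nonnegative $j_k$. Pulling back along the projection $Y \to \M \times X^\ell$, the identity persists on $Y$. Then I would multiply both sides by the arbitrary class
$$\prod_{k=1}^\ell \operatorname{ch}_{i_k}(\e_k^\vee) \cdot \rho_k^\star (\operatorname{td} X)^{a_k},$$
with $\rho_k : Y \to X_k$ the projection; this preserves the vanishing in $CH_\star(Y)$.

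Next I would push forward along $p : Y \to \M \times \M^\ell$, the projection forgetting all the $X_k$ factors. Since $\alpha$ is pulled back from the distinguished factor $\M$, the projection formula lets me pull $\alpha$ outside. Since each factor
$$\operatorname{ch}_{i_k}(\e_k^\vee) \cdot \operatorname{ch}_{j_k}(\overline{\f}_k) \cdot \rho_k^\star (\operatorname{td} X)^{a_k}$$
depends only on $\M_0$, $\M_k$, and the single factor $X_k$, the Künneth/iterated projection formula (apply the projections forgetting one $X_k$ at a time, starting with the classes that are pulled back from complementary factors) gives
$$p_\star\Bigl(\prod_{k=1}^\ell \operatorname{ch}_{i_k}(\e_k^\vee) \cdot \operatorname{ch}_{j_k}(\overline{\f}_k) \cdot \rho_k^\star (\operatorname{td} X)^{a_k}\Bigr) = \prod_{k=1}^\ell \pi_\star\bigl[\operatorname{ch}_{i_k}(\e^\vee) \cdot \operatorname{ch}_{j_k}(\overline{\f}) \cdot (\operatorname{td} X)^{a_k}\bigr],$$
where on the right-hand side $\pi: \M \times \M^\ell \times X \to \M \times \M^\ell$ has a single $X$ factor (after relabeling). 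Combining these steps yields the stated vanishing \eqref{basiccor}.

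I do not anticipate a real obstacle; the content is formal once $(i)$ is granted. The only point requiring care is the bookkeeping in the Künneth step: one must verify that each of the $\ell$ bracketed expressions truly lives on its own $(\M_0, \M_k, X_k)$ slice of $Y$ so that the pushforward of the product splits as a product of pushforwards. Once that is set up, the identity drops out by one application of the projection formula for $\alpha$.
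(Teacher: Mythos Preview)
Your proposal is correct and follows essentially the same route as the paper: pull back the vanishing $(i)$ from $\M\times X^\ell$ to $\M\times\M^\ell\times X^\ell$, multiply by the extra factors $\ch_{i_k}(\e_k^\vee)\cdot(\text{td}\,X_k)^{a_k}$, and push forward to $\M\times\M^\ell$, using the projection formula to split the pushforward factor by factor. The paper presents this in the paragraph immediately preceding the lemma, and your write-up simply makes the bookkeeping explicit.
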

As observed earlier, each factor $c_{i_k} \left (\mathsf W (\mathcal E_k, \, \overline \f) \right )$ in the products $(ii)$ of Theorem \ref{thm:applications}$^*$ is a sum of terms each containing a factor of type appearing in \eqref{basiccor} of the lemma, so the vanishings $(ii)$ follow.

\vskip.2in

Notice next that $(ii)$ implies $(iii).$ Indeed, we have in $K$-theory,
$$\mathsf W(\e, \f ) = \mathsf W(\e, \overline \f ) + \mathsf W(\e, \rho^\star F ) \, \, \text{in} \, \, K (\M \times \M),$$
therefore $$c_m (\mathsf W(\e, \f )) = \sum_{i=0}^{m} c_i (\mathsf W(\e, \overline \f )) \cdot c_{m-i} (\mathsf W(\e, \rho^\star F )),$$
and $$\overline \Delta = c_m (\mathsf W(\e, \f )) - c_m (\mathsf W(\e, \rho^\star F)) = \sum_{i=1}^m  c_i (\mathsf W(\e, \overline \f )) \cdot c_{m-i} (\mathsf W(\e, \rho^\star F )).$$
It is thus clear that any term in the expansion of the product $ \overline \Delta_{01} \cdots \overline \Delta_{0, \ell}$ contains a product $c_{i_1} (\mathsf W(\e_1, \overline \f )) \cdot c_{i_2} (\mathsf W (\e_2, \overline \f)) \cdot \cdots \cdot c_{i_{\ell}}(\mathsf W (\e_{\ell}, \overline \f) )$ for some $i_1, \ldots, i_\ell >0.$ Accordingly, the vanishing $(ii)$ implies $(iii)$.

\vskip.2in

Finally, it is easy to see that $(iii)$ implies $(i).$ 
We start with the vanishing
$$\alpha \cdot \overline \Delta_{01} \cdots \overline \Delta_{0, \ell} = 0 \in CH_\star \left (\M \times \,\M^{\ell} \right ),$$ pulled back from $\M \times \, \M^\ell$ to the larger product $\M \times \, \M^\ell \times \, X^\ell.$ Trivally, we also have 
$$\alpha \cdot \overline \Delta_{01} \cdots \overline \Delta_{0, \ell} \cdot \text{ch}_{i_1} \, (\f_1) \cdots \text{ch}_{i_\ell}\, (\f_\ell) = 0 \in CH_\star \left (\M \times \,\M^{\ell} \times \, X^\ell \right ),$$ for any $i_1, \ldots, i_\ell \geq 0.$ Here each $\f_s$ is pulled back from a factor $\M \times X$ in the product $\M^\ell \times X^\ell.$
Pushing forward under the projection $\M \times \,\M^{\ell} \times \, X^\ell \rightarrow \M \times X^\ell$ gives 
$$ \alpha \cdot\text{ch}_{i_1} \, (\overline \f_1) \cdots \text{ch}_{i_\ell}\, (\overline \f_\ell) = 0 \in CH_\star \left (\M \times \, X^\ell \right ),$$ for any $i_1, \ldots, i_\ell \geq 0.$ This concludes the proof of the theorem.
\qed

\vskip.1in

\subsection{Extension of Conjecture \ref{principal}}

We end this section by formulating the following natural extension of our main conjecture. In the context of the product $\M \times X^k \times X^\ell,$ let us index by $\{1, \ldots, \ell \}$ the individual factors in the product $X^\ell$ and by 
$\{\hat 1, \ldots, \hat k \}$ the factors in the product $X^k.$ As usual, $\overline \f_t$ denotes the normalized universal sheaf from the $t$-th factor.

\begin{conj}[\ref{principal}$^*$]
For any integers $k \geq 0$ and $\ell >0 $ consider the product $\M \times \, X^k \times \, X^\ell$ and a tautological class $\alpha \in R^d(\M\times X^k)$ satisfying 
$$d + \ell > \dim ( \M \times X^k).$$
For any  indices $i_1,\ldots,i_\ell\geq 0$, partition $\Omega \sqcup\Theta = \{1,\ldots, \ell\}$, assignment $s:\Theta\to\{\hat 1,\ldots,\hat k\}$
we have
\begin{equation*}
\alpha \cdot \prod_{t\in \Omega}\ch_{i_t}\left(\overline{\mathcal{F}}_t \right)\cdot \prod_{t\in \Theta} \ch_{i_t} ( \mathcal O_{\overline{\Delta}_{s_t,t}} ) =0 \, \, \, \text{in} \, \, \, CH_\star\left(\M\times \, X^k\times \, X^\ell\right).
\end{equation*}
\end{conj}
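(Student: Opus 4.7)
The plan is to reduce Conjecture~\ref{principal}$^*$ to the base case $|\Theta| = 0$, which coincides with the natural extension of Conjecture~\ref{principal} to tautological classes on the enlarged base $\M \times X^k$; this extension is the subject of the inductive argument carried out in Section~\ref{sec: hilb scheme} for the proof of Theorem~\ref{principal0}. The reduction proceeds by induction on $|\Theta|$, eliminating one diagonal factor $\ch_{i_{t^*}}(\mathcal{O}_{\overline{\Delta}_{s_{t^*}, t^*}})$ at a time.

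The essential computational input is the explicit form of $\ch(\mathcal{O}_{\overline{\Delta}})$ on $X \times X$. Writing $\mathcal{O}_{\overline{\Delta}} = \mathcal{O}_\Delta - \rho_2^* \mathcal{O}_c$ and applying Grothendieck--Riemann--Roch to $\mathcal{O}_\Delta = \Delta_* \mathcal{O}_X$, together with the $K3$ identity $\operatorname{td}(X) = 1 + 2c_X$, one obtains
\[
\ch_i(\mathcal{O}_{\overline{\Delta}}) =
\begin{cases}
\Delta - \rho_2^* c, & i = 2, \\
-2\,\rho_1^* c \cdot \rho_2^* c, & i = 4, \\
0, & \text{otherwise}.
\end{cases}
\]
In particular, nontrivial factors arise only when $i_t \in \{2, 4\}$, and the argument now splits into two cases for each $t^* \in \Theta$.

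Fix $t^* \in \Theta$ with $s^* = s_{t^*}$. If $i_{t^*} = 4$, the factor $-2\,\rho_{s^*}^* c \cdot \rho_{t^*}^* c$ splits as a product: $\rho_{s^*}^* c$ is a Beauville--Voisin pullback from the $s^*$-th component of $X^k$, absorbing into a new tautological class $\alpha' = -2\,\alpha \cdot \rho_{s^*}^* c \in R^{d+2}(\M \times X^k)$, while $\rho_{t^*}^* c$ is a codimension-$2$ cut on the $t^*$-th $X^\ell$-component. The projection formula along the forgetful map $q \colon \M \times X^k \times X^\ell \to \M \times X^k \times X^{\ell - 1}$ shows that the claim is equivalent to $\alpha' \cdot \prod_\Omega \ch(\overline{\mathcal{F}}_t) \cdot \prod_{\Theta \setminus \{t^*\}} \ch(\mathcal{O}_{\overline{\Delta}_{s_t, t}}) = 0$ on the smaller product, and the codimension bound $(d+2) + (\ell - 1) > \dim(\M \times X^k)$ follows from the original $d + \ell > \dim(\M \times X^k)$; the inductive hypothesis then closes this case. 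If $i_{t^*} = 2$, the factor is $\overline{\Delta}_{s^*, t^*} = \Delta_{s^*, t^*} - \rho_{t^*}^* c$, and the claim is equivalent to the equality of pushforwards $\iota_* P = \iota'_* P$, where $\iota, \iota' \colon \M \times X^k \times X^{\ell-1} \hookrightarrow \M \times X^k \times X^\ell$ are respectively the diagonal embedding $X_{t^*} = X_{s^*}$ and the constant embedding $X_{t^*} = c$, and $P = \alpha \cdot \prod_\Omega \ch(\overline{\mathcal{F}}_t) \cdot \prod_{\Theta \setminus \{t^*\}} \ch(\mathcal{O}_{\overline{\Delta}_{s_t, t}})$ is the common pullback of the remaining factors (which do not involve $X_{t^*}$).

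The main obstacle is establishing the latter equality in Chow. It is a Beauville--Voisin invariance statement for $P$ along the $X_{s^*}$-direction: cohomologically it holds automatically under $d + \ell > \dim(\M \times X^k)$, but promoting it to rational equivalence requires a spreading argument (cf.~\cite[Theorem~3.1]{V2}) in families over rational curves in $X_{s^*}$ through the BV point $c$. The plan is to combine the inductive hypothesis with the $|\Theta| = 0$ extended Conjecture~\ref{principal}, which furnishes the rational equivalences needed fiberwise on each rational curve; the codimension inequality together with the tautological structure of $\alpha$ and the BV-vanishing properties of $\overline{\mathcal{F}}_t$ and $\mathcal{O}_{\overline{\Delta}_{s_t, t}}$ should then provide the rigidity required for the spread class to vanish globally.
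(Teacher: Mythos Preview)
The statement you are addressing is a \emph{conjecture}; the paper does not prove it for general $\M$. What the paper does prove is Theorem~\ref{principal0}$^*$, the case $\M = X^{[n]}$, and the method there is entirely different from your proposal: it is induction on $n$ via the nested Hilbert scheme, not induction on $|\Theta|$. You have also misread Section~\ref{sec: hilb scheme}: the inductive step there does not prove the $\Theta = \varnothing$ case and then extend; rather, the decomposition $\overline{\mathcal I}_{n+1}^{(t)} = \overline{\mathcal I}_{n}^{(t)} - \mathcal L \cdot \mathcal O_{\overline{\Delta}_{0,t}} - (\mathcal L - 1)\cdot \mathcal O_{c_X^{(t)}}$ in $K$-theory forces elements of $\Omega$ to migrate into $\Theta$ during the induction. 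So the full statement with arbitrary $\Theta$ is \emph{needed} for the induction on $n$ to close, and a reduction to $\Theta = \varnothing$ would be circular as a strategy for Theorem~\ref{principal0}$^*$.

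Your reduction also has a genuine gap in the $i_{t^*} = 2$ case. The equality $\iota_\star P = \iota'_\star P$ you need is precisely the Chow vanishing $P \cdot \overline{\Delta}_{s^*,t^*} = 0$ you are trying to prove, and the spreading argument you sketch does not supply it: spreading over rational curves in the $X_{s^*}$ factor would at best give a statement fiberwise over $X_{s^*}$, not the global identity. The obvious fix, namely absorbing $\overline{\Delta}_{s^*,t^*}$ into a new tautological class $\alpha' = \alpha \cdot \overline{\Delta}_{s^*,t^*} \in R^{d+2}(\M \times X^{k+1})$ and invoking the inductive hypothesis with $k' = k+1$, $\ell' = \ell - 1$, fails because the required inequality becomes $(d+2) + (\ell-1) > \dim(\M \times X^{k+1}) = \dim(\M \times X^k) + 2$, i.e.\ $d + \ell > \dim(\M \times X^k) + 1$, which is one more than you have. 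This off-by-one is exactly why the paper formulates Conjecture~\ref{principal}$^*$ with the $\Theta$ factors as part of the \emph{conclusion} rather than absorbing them into the hypothesis on $\alpha$.
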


Observe that Conjecture \ref{principal} is a special case of Conjecture \ref{principal}$^*$, specifically the case $k=0$ and (necessarily) $\Theta=\varnothing$. In $K$-theory, Conjecture \ref{principal}$^*$ predicts the natural generalization of \eqref{conjKTheory}, namely that for any $0\le a\le \ell =\dim\M+2k+1$ and assignment $s:\{a+1,\ldots, \ell\}\to\{\hat 1,\ldots,\hat k\}$, we have:
\begin{align*}
    \overline{\mathcal{F}}_1\cdot\overline{\mathcal{F}}_2\cdot\cdots\cdot\overline{\mathcal{F}}_{a}\cdot\mathcal{O}_{\overline{\Delta}_{s_{a+1},a+1}}\cdot\mathcal{O}_{\overline{\Delta}_{s_{a+2},a+2}}\cdot\cdots\cdot\mathcal{O}_{\overline{\Delta}_{s_{\ell},\ell}}=0.
\end{align*}
In Section \ref{sec: hilb scheme} we will prove the following, which implies Theorem \ref{principal0} in the introduction.
\begin{thmn}[\ref{principal0}$^*$]
Conjecture \ref{principal}$^*$ holds when $\M=X^{[n]}$ is the Hilbert scheme of $n$ points on $X$.
\end{thmn}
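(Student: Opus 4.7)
The plan is to prove Theorem \ref{principal0}$^*$ by induction on $n$, with base case $n=1$ reducing to the three fundamental K3 identities, and the inductive step passing from $X^{[n]}$ to $X^{[n+1]}$ via the nested Hilbert scheme. For the base case, $X^{[1]} = X$ and the normalized universal sheaf on $X \times X$ is $\overline{\I}_\Delta$, so each factor $\ch_{i_t}(\overline{\f}_t)$ in the product reduces to a polynomial expression in the normalized diagonal classes $\overline{\Delta}_{0t}$, and similarly each factor $\ch_{i_t}(\mathcal{O}_{\overline{\Delta}_{s_t,t}})$ reduces to a polynomial in normalized diagonals among the $X$-factors. The predicted vanishings on $X \times X^k \times X^\ell$ then follow from the three K3 identities \eqref{bv}, \eqref{second}, and \eqref{third}, together with the spreading principle for divisor classes and the multiplicative closure of the Beauville-Voisin ring $R_\star(X)$.

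For the inductive step, assume the conjecture for $X^{[n]}$ and all products $X^{[n]} \times X^{k'} \times X^{\ell'}$. The key geometric tool is the nested Hilbert scheme $X^{[n, n+1]}$, equipped with its three natural maps $p: X^{[n, n+1]} \to X^{[n]}$, $q: X^{[n, n+1]} \to X^{[n+1]}$, and the residue map $r: X^{[n, n+1]} \to X$. On $X^{[n, n+1]} \times X$ the universal ideal sheaves $\I_n$ and $\I_{n+1}$ fit into a short exact sequence involving the residue point, and, following the EGL philosophy, this allows any tautological class on $X^{[n+1]}$ to be expressed through pullback-pushforward in terms of tautological classes on $X^{[n]}$ together with additional $X$-factors contributed by $r$. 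At the level of Chow cycles, rather than numerical invariants which is what the classical EGL machine handles, the precise form of this rewriting is controlled by the Chow-level Nakajima-Lehn commutator identities recently established by Maulik-Negu\cb{t} in \cite{MN}. Applying these, one rewrites the product
$$\alpha \cdot \prod_{t \in \Omega} \ch_{i_t}(\overline{\f}_t) \cdot \prod_{t \in \Theta} \ch_{i_t}(\mathcal{O}_{\overline{\Delta}_{s_t,t}}) \in CH_\star(X^{[n+1]} \times X^k \times X^\ell)$$
as a pushforward of an analogous expression on $X^{[n]} \times X^{k'} \times X^{\ell'}$ with $k' + \ell' = k + \ell + 1$, to which the induction hypothesis applies. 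The diagonal-supported terms $\mathcal{O}_{\overline{\Delta}_{s_t, t}}$ are naturally generated by this rewriting, which is precisely why Conjecture \ref{principal}$^*$, rather than Conjecture \ref{principal} alone, is the right statement to prove inductively: it is the minimal form closed under the EGL passage.

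The main obstacle is the combinatorial and technical bookkeeping required at each stage. One must verify that every application of the Chow-level commutator identity from \cite{MN} preserves the codimension-length inequality $d + \ell > \dim(\M \times X^k)$, while consistently redistributing codimension among $\alpha$, the Chern-character factors, and the new $X$-factor classes, which will be a mixture of pullbacks from $R_\star(X)$, normalized diagonals, and further $\ch$-classes. One must also control how the normalization cycle $[F] = c_\M$ changes under the passage $n \to n+1$, and ensure that the Todd-class and derivative corrections produced by Grothendieck-Riemann-Roch through $X^{[n, n+1]}$ land in the Beauville-Voisin subring of the relevant $X$-factors, where the base-case K3 identities deliver the final vanishings. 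Once these data are arranged coherently, the inductive step closes and establishes Conjecture \ref{principal}$^*$ for $X^{[n+1]}$.
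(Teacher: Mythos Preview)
Your outline matches the paper's approach in its essentials: induction on $n$ via the nested Hilbert scheme, the base case reducing to the three $K3$ identities \eqref{bv}, \eqref{second}, \eqref{third}, and the inductive step relying on the Chow-level operator identities of \cite{MN}. One point, however, is phrased in a way that inverts the logic and hides the actual content of the argument. You describe the inductive step as ``rewriting'' the class $\gamma$ on $X^{[n+1]} \times X^k \times X^\ell$ as a pushforward from the $X^{[n]}$-level, after which the induction hypothesis applies. The mechanism in fact runs in the opposite direction: one pulls $\gamma$ back to $X^{[n,n+1]}$ via $\psi$ and pushes forward via $\sigma = \phi \times p$ to $X^{[n]} \times X^{k+1} \times X^k \times X^\ell$, obtaining the two classes $\sigma_\star \psi^\star \gamma$ and $\sigma_\star(\lambda \cdot \psi^\star \gamma)$; it is these that the exact sequence \eqref{ex.seq1} expresses in the form required by the induction hypothesis (with $k' = k+1$, not merely $k' + \ell' = k + \ell + 1$). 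The substantive step, and the precise place where \cite{MN} enters, is then an \emph{injectivity} statement (Lemma~\ref{lemmaXn:1}): the simultaneous vanishing of $\sigma_\star \psi^\star \gamma$ and $\sigma_\star(\lambda \cdot \psi^\star \gamma)$ forces $\gamma = 0$. This holds because $\sigma_\star \psi^\star = \mathfrak{q}_{-1}$ and $\sigma_\star(\lambda \cdot \psi^\star(-)) = [\delta, \mathfrak{q}_{-1}] = \mathfrak{q}_{-1}^{(1)}$, and by \cite{MN} these two operators generate all Nakajima lowering operators on Chow, which by \cite{dC-M} jointly detect vanishing. Without this detection lemma the induction does not close, since there is no way in general to exhibit $\gamma$ itself as a pushforward from the $X^{[n]}$-level; your sketch should make this role of \cite{MN} explicit rather than folding it into ``rewriting''.
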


\vskip.3in

\section{The product identities for \texorpdfstring{$\M=X^{[n]}$}{M=X\^{}[n]}} 
\label{sec: hilb scheme}

\vskip.3in

The aim of this section is to prove Theorem \ref{principal0}$^*$. We let $\mathcal{I}_n$ denote the ideal sheaf of the universal subscheme $$\mathcal Z_n \subset X^{[n]}\times X$$ and set
$$\overline{\mathcal{I}}_n:=\mathcal{I}_n-\rho^*\mathsf I_n,$$
the rank zero virtual universal sheaf, where $\mathsf I_n$ is the ideal sheaf on $X$ of any subscheme of length $n$ with $c_2 (\mathsf I_n) = n \,c_X.$
We state the theorem explicitly. In the context of the product $X^{[n]} \times X^k \times X^\ell,$ let us index by $\{1, \ldots, \ell \}$ the individual factors in the product $X^\ell$ and by 
$\{\hat 1, \ldots, \hat k \}$ the factors in the product $X^k.$ Further, $\overline{\mathcal I}_n^{(t)}$ denotes the normalized universal sheaf from the $t$-th factor.  We then restate:

\medskip

\begin{thmn}[\ref{principal0}$^*$]
For any integers $n\ge1$, $\ell\geq 1$, $k\geq 0$, consider the product $X^{[n]} \times \, X^k \times \, X^\ell$ and a tautological class $\alpha \in R^d(X^{[n]} \times X^k)$ satisfying 
$$d + \ell >  2n + 2k.$$
For any  indices $i_1,\ldots,i_\ell\geq 0$, partition $\Omega \sqcup\Theta = \{1,\ldots, \ell\}$, assignment $s:\Theta\to\{\hat 1,\ldots,\hat k\}$
we have
\begin{equation*}
\alpha \cdot \prod_{t\in \Omega}\ch_{i_t}\left(\overline{\mathcal{I}}_n^{(t)} \right)\cdot \prod_{t\in \Theta} \ch_{i_t} ( \mathcal O_{\overline{\Delta}_{s_t,t}} ) =0 \, \, \, \text{in} \, \, \, CH_\star\left(X^{[n]}\times \, X^k\times \, X^\ell\right).
\end{equation*}
\end{thmn}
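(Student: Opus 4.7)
The proof proceeds by induction on $n$, following the EGL strategy at the Chow level via the results of \cite{MN}.

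\textbf{Base case ($n=1$).} Here $X^{[1]} = X$ and $\overline{\mathcal I}_1$ coincides with the normalized ideal $\overline{\mathcal I}$ of the diagonal on $X \times X$, so $\ch_{i_t}(\overline{\mathcal I}_1^{(t)})$ is (up to sign) the normalized diagonal class $\overline\Delta$ between the distinguished factor and the $t$-th factor of $X^\ell$. The tautological ring $R_\star(X \times X^k)$ is generated by pullbacks of $c_X$ and divisors from the individual factors of $X$ together with diagonal classes $\rho_{rs}^*\Delta$. Each asserted vanishing then unwinds to a product of normalized diagonals, ordinary diagonals, and point/divisor classes in a multi-product of copies of $X$, and follows by pullback and multiplication from the three fundamental $K3$ identities \eqref{bv}, \eqref{second}, and \eqref{third}.

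\textbf{Inductive step.} Assume the theorem for $X^{[n-1]}$. Introduce the nested Hilbert scheme $X^{[n-1,n]}$ with natural maps $p \colon X^{[n-1,n]} \to X^{[n-1]}$, $q \colon X^{[n-1,n]} \to X^{[n]}$ (generically finite surjection), and $r \colon X^{[n-1,n]} \to X$ (residual point); the combined $(p,r)$ is birational onto $X^{[n-1]} \times X$, realized as the blowup along the universal subscheme $\mathcal Z_{n-1}$. The fundamental short exact sequence
\[
0 \to \mathcal I_n \to \mathcal I_{n-1} \to \iota_* \mathcal O_{\Gamma_r} \to 0
\]
on $X^{[n-1,n]} \times X$, with $\Gamma_r$ the graph of $r$, yields after normalization the $K$-theoretic identity $\overline{\mathcal I}_n = \overline{\mathcal I}_{n-1} - \mathcal O_{\overline{\Gamma}_r}$. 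To prove the target vanishing on $X^{[n]} \times X^k \times X^\ell$, we pull it back to $X^{[n-1,n]} \times X^k \times X^\ell$ via $q \times \text{id}$ (a generically finite surjection, so this loses no information with $\mathbb Q$-coefficients); expand each $\ch_{i_t}(\overline{\mathcal I}_n^{(t)})$ using the $K$-theoretic identity; and distribute the product over $t \in \Omega$, yielding a sum indexed by subsets $\Omega' \subseteq \Omega$. One then uses the Chow-level Maulik--Negu\cb{t} framework to express $q^*\alpha$ as a tautological class descending via $(p,r) \times \text{id}$ from $X^{[n-1]} \times X \times X^k$, with the new $X$-factor playing the role of $\widehat{k+1}$. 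Each summand then becomes an instance of the theorem for $X^{[n-1]}$: the indices moved from $\Omega$ to $\Theta$ acquire assignment $s_t = \widehat{k+1}$, and the codimension hypothesis $d + \ell > 2n + 2k$ reads as $d + \ell > 2(n-1) + 2(k+1)$ in the new setting, so the inductive hypothesis closes the argument.

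\textbf{Main obstacle.} The principal difficulty lies in carrying out the descent from $X^{[n-1,n]}$ to $X^{[n-1]} \times X$ through $(p,r)$: because it is a blowup along $\mathcal Z_{n-1}$ rather than an isomorphism, pushforward requires explicit treatment of exceptional contributions. One must verify that every tautological generator from Definition \ref{def: taut ring}, once pulled back via $q$ and combined with the expanded $\overline{\mathcal I}_n^{(t)}$-factors, descends to a tautological class on $X^{[n-1]} \times X \times X^k$ in the sense of Definition \ref{def: taut ring} for $X^{[n-1]}$. The cohomological analog of this descent is classical EGL, but carrying it out at the Chow level relies essentially on the overhaul of the Nakajima--Lehn commutator relations completed in \cite{MN}, together with careful bookkeeping of how the normalized virtual sheaves $\overline{\mathcal I}_n$ transform through the nested Hilbert scheme diagram.
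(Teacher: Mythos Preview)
Your overall plan (induction on $n$ via the nested Hilbert scheme, with the Chow-level input from \cite{MN}) matches the paper's, and your base case is essentially right. But the inductive step as you describe it has two genuine gaps.

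\textbf{(1) The $K$-theoretic identity is wrong.} On $X^{[n-1,n]}\times X$ the exact sequence \eqref{ex.seq1} gives
\[
q^\star\overline{\mathcal I}_n \;=\; p^\star\overline{\mathcal I}_{n-1}\;-\;\mathcal L\otimes\mathcal O_{\overline\Gamma_r}\;-\;(\mathcal L-1)\otimes\mathcal O_{c_X},
\]
not $\overline{\mathcal I}_n=\overline{\mathcal I}_{n-1}-\mathcal O_{\overline\Gamma_r}$. The tautological line bundle $\mathcal L=\mathcal O_{\mathbb P(\mathcal I_{n-1})}(1)$ cannot be suppressed: its first Chern class $\lambda$ is \emph{not} pulled back from $X^{[n-1]}\times X$, and it appears throughout the expansion of $q^\star\gamma$. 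The extra $(\mathcal L-1)\otimes\mathcal O_{c_X}$ term, whose Chern character is $\lambda\cdot p(\lambda)\cdot c_X^{(t)}$, is what makes the codimension count close when $X$-factors are discarded.

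\textbf{(2) The descent step does not work as stated.} You write that $q^\star\alpha$ ``descends via $(p,r)\times\text{id}$ from $X^{[n-1]}\times X\times X^k$ as a tautological class.'' It does not: by \eqref{ex.seq1} one has $q^\star\alpha=\sum_{j}\alpha_{d-j}\cdot\lambda^j$ with $\alpha_{d-j}\in R^{d-j}(X^{[n-1]}\times X^{k+1})$ pulled back via $\sigma$, but $\lambda$ itself is not. Since you propose to show $q^\star\gamma=0$ on $X^{[n-1,n]}\times X^k\times X^\ell$, you would need either to know the $\lambda$-relations in $CH_\star(X^{[n-1,n]})$ over $\sigma^\star CH_\star(X^{[n-1]}\times X)$, or to find another way to get back to $X^{[n-1]}$ where the inductive hypothesis lives. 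The paper solves this with Lemma~\ref{lemmaXn:1}: one does \emph{not} prove $q^\star\gamma=0$ directly, but rather shows
\[
\sigma_\star q^\star\gamma=0\quad\text{and}\quad\sigma_\star(\lambda\cdot q^\star\gamma)=0
\]
on $X^{[n-1]}\times X^{k+1}\times X^\ell$. These are precisely $\mathfrak q_{-1}(\gamma)$ and $\mathfrak q_{-1}^{(1)}(\gamma)$, and the combination of \cite{dC-M} and \cite{MN} shows they jointly detect vanishing. After pushforward, the powers of $\lambda$ become tautological via $\sigma_\star(\lambda^i)=(-1)^i c_i(-\mathcal I_{n-1})$, and \emph{then} each term is of the shape to which the inductive hypothesis applies. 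This is where \cite{MN} actually enters; invoking it merely to ``express $q^\star\alpha$ as a tautological class'' misses the point.
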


%%%%%%%%%%%%%%%%%%%%%%%%%%%%%%%%%%%%%%%%%%%%%%%%%%%%%%%%%%%
%%%%%%%%%%%%%%%%%%%%%%%%%%%%%%%%%%%%%%%%%%%%%%%%%%%%%%%%%%

%\vskip.2in

\subsection{Induction preliminaries}

We argue inductively on the number of points using the geometry of the nested Hilbert scheme 
$$X^{[n,n+1]}\subset X^{[n]}\times X^{[n+1]}$$ 
parametrizing pairs $(\xi,\xi')\in X^{[n]}\times X^{[n+1]}$ such that $\xi\subset \xi'$. The inductive technique was first used in
\cite{EGL} to relate top intersections  on $X^{[n+1]}$ and $X^{[n]}\times X$; we now recall its main features. 
Each point $(\xi,\xi')\in X^{[n,n+1]}$ corresponds to an exact sequence
\begin{equation}
\label{ex.seq}
0\to I_{\xi'}\to I_\xi\to\mathcal{O}_{x}\to0,
\end{equation}
leading to projection maps
\begin{equation}
\label{nestedHilbertschemediagram}
\begin{tikzcd}
&X^{[n,n+1]}\arrow[dl, "\phi"'] \arrow [d, "p"] \arrow[dr, "\psi"]&\\
X^{[n]} & X& X^{[n+1]}
\end{tikzcd},
\end{equation}
and globally to an isomorphism
\[X^{[n,n+1]}\cong\P(\mathcal{I}_n)\]
of smooth projective varieties. 

An important role for the geometry of $X^{[n, n+1]}$ is played by the hyperplane line bundle 
$$\mathcal{L}=\mathcal{O}_{\P (\mathcal{I}_n)}(1)$$ with first Chern class 
$c_1 (\mathcal L) = \lambda.$ 
Letting $$\sigma = \phi \times p: X^{[n, n+1]} \to X^{[n]} \times X,$$
we have (cf. \cite{EGL}) 
\begin{equation}
\label{Lemma1.1:EGL}
\sigma_\star\left(\lambda^i\right)=(-1)^ic_i(-\mathcal{I}_n).
\end{equation}

The following fundamental exact sequence on $X^{[n, n+1]} \times X$ relates the universal ideal sheaves and the exceptional line bundle $\mathcal{L}$:
\begin{equation}
\label{ex.seq1}
0\to \psi^{\star}_X \mathcal{I}_{n+1}\to \phi^{\star}_X \mathcal{I}_n \to \pi^{\star}\mathcal{L}\otimes \sigma_X^\star\mathcal{O}_{\Delta}\to 0. 
\end{equation}
Here (and elsewhere in the paper) we use the notation $f_X = f \times \text{id}_X$; the map $\pi:\P(\mathcal{I}_n)\times X\to \P(\mathcal{I}_n)$
is the projection; $\Delta$ denotes the diagonal in $X \times X$, pulled back in \eqref{ex.seq1} via $\sigma_X: X^{[n, n+1]} \times X \to X^{[n]} \times X \times X.$

\vskip.1in

Furthermore, for a vector bundle $F \to X,$ let $F^{[n]}$ denote the associated tautological vector bundle on $X^{[n]},$ 
$$F^{[n]} = \pi_{\star} \left ( \mathcal O_{\mathcal Z_n} \otimes \rho^{\star} (F) \right ).$$
As usual in this text, $\pi$ and $\rho$ are the projections from $X^{[n]} \times X$ to the first and second factors 
respectively. The $K$-theoretic equality
$$\psi^{\star} F^{[n+1]} = \phi^\star F^{[n]} + \mathcal L \cdot p^\star F \,\, \, \text{in} \, \, \, K (X^{[n, n+1]})$$ follows from \eqref{ex.seq1}.
In particular, 
\begin{equation}
\label{boundary}
 \mathcal L = \psi^{\star} \mathcal O^{[n+1]} - \phi^\star \mathcal O^{[n]}  \,\, \, \text{in} \, \, \, K (X^{[n, n+1]}).
\end{equation}

\vskip.2in

The induction in \cite{EGL} only tracks degrees of top codimension classes on the Hilbert scheme $X^{[n]}$. Since our argument involves 
Chow classes of arbitrary codimension, we note the following 

\begin{lemma}
\label{lemmaXn:1}

Let $\alpha$ be a class in $CH_\star(X^{[n+1]} \times X^k).$ Then $$\alpha=0 \iff 
\sigma_\star \psi^{\star} \alpha= \sigma_\star (\lambda \cdot \psi^{\star} \alpha) = 0 \, \, \text{in} \, \, CH_\star(X^{[n]}\times X\times X^k).$$
\end{lemma}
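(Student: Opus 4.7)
The forward direction is immediate. For the converse, my first move would be to reduce to a statement on $X^{[n,n+1]}\times X^k$ by invoking that $\psi: X^{[n,n+1]} \to X^{[n+1]}$ is generically finite of degree $n+1$. Indeed, the generic fiber over a reduced length-$(n+1)$ subscheme consists of the $n+1$ length-$n$ subschemes obtained by removing a reduced point, so $\psi_\star\psi^\star = (n+1)\cdot \mathrm{id}$ on rational Chow, making $\psi^\star$ injective. It suffices therefore to show that the vanishing hypotheses imply $\psi^\star\alpha = 0$.

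Next I would use that $\sigma: X^{[n,n+1]} \cong \mathbb{P}(\mathcal{I}_n) \to X^{[n]}\times X$ is the blow-up of $X^{[n]}\times X$ along the smooth codimension-$2$ universal subscheme $\mathcal{Z}_n$. Let $j: E \hookrightarrow X^{[n,n+1]}$ be the exceptional divisor and $g: E \to \mathcal{Z}_n$ the associated $\mathbb{P}^1$-bundle projection. The blow-up formula yields a unique decomposition
$$\psi^\star\alpha = \sigma^\star\beta_0 + j_\star g^\star\gamma,$$
with $\beta_0\in CH_\star(X^{[n]}\times X\times X^k)$ and $\gamma\in CH_\star(\mathcal{Z}_n\times X^k)$. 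Using \eqref{Lemma1.1:EGL}, the vanishing $c_1(\mathcal{I}_n) = 0$ (since $\det\mathcal{I}_n$ is trivial, as $\mathcal{I}_n$ differs from $\mathcal{O}$ by a codimension-$2$ torsion sheaf), and the fact that $\lambda|_E$ restricts to the hyperplane class on the $\mathbb{P}^1$-fibers of $g$, the projection formula gives
\begin{align*}
\sigma_\star\psi^\star\alpha &= \beta_0,\\
\sigma_\star(\lambda\cdot\psi^\star\alpha) &= i_\star\gamma,
\end{align*}
where $i:\mathcal{Z}_n\hookrightarrow X^{[n]}\times X$. The hypotheses thus translate to $\beta_0 = 0$ and $i_\star\gamma = 0$.

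The main obstacle is to deduce $\gamma = 0$ from the weaker vanishing $i_\star\gamma = 0$, since $i_\star$ need not be injective. I would resolve this by computing $\gamma$ explicitly as the exceptional component of $\psi^\star\alpha$: restricting to $E$ and pushing forward by $g$ gives $\gamma = -g_\star(\psi_E^\star\alpha)$ with $\psi_E := \psi\circ j$. The image of $\psi_E$ lies in the non-reduced boundary locus of $X^{[n+1]}$, and combined with the identification $X^{[n,n+1]} \cong \mathcal{Z}_{n+1}$ and the fundamental exact sequence \eqref{ex.seq1}, this structural information should be enough to force the desired vanishing. Pinning down this last step is the technical heart of the argument.
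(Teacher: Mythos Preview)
Your reduction to showing $\psi^\star\alpha=0$ via $\psi_\star\psi^\star=(n+1)\cdot\mathrm{id}$ is fine, and your blow-up decomposition together with the computations $\sigma_\star\psi^\star\alpha=\beta_0$ and $\sigma_\star(\lambda\cdot\psi^\star\alpha)=i_\star\gamma$ are correct. The problem is the step you yourself flag: getting $\gamma=0$ from $i_\star\gamma=0$. The closed embedding $i:\mathcal{Z}_n\hookrightarrow X^{[n]}\times X$ has codimension two, and there is no reason for $i_\star$ to be injective on Chow groups. Your proposed fix, rewriting $\gamma=-g_\star(\psi_E^\star\alpha)$ and appealing to the fact that $\psi_E$ lands in the boundary of $X^{[n+1]}$, does not impose any usable constraint: $\alpha$ is an arbitrary Chow class, and its restriction to the boundary can be anything. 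As stated, the argument stops here.

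The paper's proof takes a completely different route and explains why a single application of $\sigma_\star\psi^\star$ and $\sigma_\star(\lambda\cdot\psi^\star)$ is not enough on its own. It identifies these two operations with the Nakajima lowering operator $\mathfrak{q}_{-1}$ and its commutator $\mathfrak{q}_{-1}^{(1)}=[\delta,\mathfrak{q}_{-1}]$ with the boundary operator. The de Cataldo--Migliorini description of $CH_\star(X^{[n+1]}\times X^k)$ shows that $\alpha=0$ is equivalent to the vanishing of \emph{all} $\mathfrak{q}_{-i}(\alpha)$, not just $\mathfrak{q}_{-1}(\alpha)$. The crucial input, taken from the recent Maulik--Negu\cb{t} result on Lehn's formulas in Chow, is the commutator identity $[\mathfrak{q}_{-1}^{(1)},\mathfrak{q}_{-i}]=i\,\Delta_\star\mathfrak{q}_{-i-1}$, which lets one generate all lowering operators from $\mathfrak{q}_{-1}$ and $\mathfrak{q}_{-1}^{(1)}$ by iteration. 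In other words, the two hypotheses are used not once but repeatedly, cascading down through the Hilbert schemes $X^{[n]},X^{[n-1]},\ldots$ Your blow-up approach captures only the first step of this descent; without an inductive mechanism (and without the Chow-level commutator identity, which is itself nontrivial), it cannot be completed.
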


\vskip.1in

\noindent
In the statement of the lemma and also onwards, we abuse notation and denote
\[
\begin{aligned}
\psi&=\psi\times \mathrm{id}_{X^{k}}\colon X^{[n,n+1]}\times X^k\rightarrow X^{[n+1]}\times X^k\\
\sigma&=\sigma\times \mathrm{id}_{X^{k}}\colon X^{[n,n+1]}\times X^k\rightarrow X^{[n]}\times X \times X^k,
\end{aligned}
\]

\vskip.in

\begin{proof}
We use the description of the Chow groups of $$\text {Hilb} = \amalg_{n \geq 0} \,X^{[n]}$$
 in \cite{dC-M}, as well as an aspect of Lehn's formulas in Chow recently established in \cite{MN}. 
Recall first the definition of the Nakajima operators $\mathfrak q_{i}, \mathfrak q_{-i}, i>0.$ For every $i>0$ consider the subscheme
$$X^{[n, n+i]} = \{(I \supset I') \, \text{s.t.} \, I/I' \, \text{is supported at a single point} \, x \in X\} \subset X^{[n]} \times X^{[n+i]},$$
with accompanying maps
$$
\begin{tikzcd}
&X^{[n,n+i]}\arrow[dl, "\phi"'] \arrow [d, "p"] \arrow[dr, "\psi"]&\\
X^{[n]} & X& X^{[n+i]}
\end{tikzcd}
$$
This correspondence defines the operators:
$$\mathfrak q_{\pm i}: \, CH_{\star} (\text{Hilb}) \to CH_{\star} (\text{Hilb} \times X),$$
$$\mathfrak q_{i} = (\psi \times p)_{\star} \circ \phi^{\star}, $$
$$\mathfrak q_{-i} = (\phi \times p)_{\star} \circ \psi^\star, $$
which satisfy the commutation relations in the Heisenberg algebra. 
By composition one can form any string operator $$\mathfrak q_{i_1} \cdots \mathfrak q_{i_\ell}: CH_{\star} (\text{Hilb}) \to CH_{\star} (\text{Hilb} \times X^\ell).$$
Any class $\Gamma \in CH_{\star} (X^\ell)$ then defines an endomorphism of $CH_{\star} (\text{Hilb})$ via
$$\mathfrak q_{i_1} \cdots \mathfrak q_{i_\ell} (\Gamma)=\pi_{1*}(\mathfrak q_{i_1} \cdots \mathfrak q_{i_\ell}\cdot \pi_2^*(\Gamma)),$$
where $\pi_1, \pi_2\colon \text{Hilb} \times X^\ell \rightarrow \text{Hilb},X^\ell$ are the standard projections.

The main theorem of \cite{dC-M} establishes that all Chow classes of the Hilbert scheme arise from Chow classes of symmetric products $X^{\ell}$ through the Nakajima correspondences. To state this precisely, let the vacuum vector $v$ be the generator of $CH_{\star} (X^{[0]}) \simeq \mathbb Q.$ Then 
\begin{equation}
\label{dcm}
CH_{\star} (\text{Hilb}) = \bigoplus_{\substack{ n_1 \geq \ldots \geq n_\ell > 0 \\ \Gamma \in CH_{\star} (X^\ell)^{\text{sym}}}} \mathbb Q \cdot \mathfrak q_{n_1} \cdots \mathfrak q_{n_\ell} (\Gamma) \cdot v,
\end{equation}
where $CH_{\star} (X^\ell)^{\text{sym}} \subset CH_{\star} (X^\ell)$ denotes the subring of classes invariant under transpositions $(ij)$ for which $n_i = n_j$. 
The isomorphism \eqref{dcm} is induced by a correspondence whose transpose gives the inverse map. It follows that for each $n$,
$$\bigoplus_{\substack{n_1 + \cdots  +n_\ell = n \\ n_1 \geq \ldots \geq n_\ell > 0}} \mathfrak {q_{-n_1}} \cdots \mathfrak q_{-n_\ell}    : \,  CH_{\star} (X^{[n]}) \longrightarrow \bigoplus_{\substack{n_1 + \cdots  +n_\ell = n \\ n_1 \geq \ldots \geq n_\ell > 0}} CH_{\star} (X^\ell)$$ is injective. 

The Chow groups of the products $\text{Hilb} \times X^k, \, k>0,$ admit a parallel description, since the inverse of the isomorphism \eqref{dcm} is induced by the transpose correspondence. We thus have 

\begin{equation}
CH_{\star} (\text{Hilb} \times X^k) = \bigoplus_{\substack{n_1 \geq \ldots \geq n_\ell > 0 \\ \Gamma \in CH_{\star} (X^{\ell +k})^{\text{sym}}} } \mathbb Q \cdot \mathfrak q_{n_1} \cdots \mathfrak q_{n_l} (\Gamma) \cdot v.
\end{equation}
(Here $CH_{\star} (X^{\ell +k})^{\text{sym}} \subset CH_{\star} (X^{\ell+k})$ is the subring of classes invariant under transpositions on the first $\ell$ factors for which $n_i = n_j.$)
Correspondingly, for every $n$ and $k$, the map
$$\bigoplus_{\substack{n_1 + \cdots  +n_\ell = n \\ n_1 \geq \ldots \geq n_\ell > 0}} \mathfrak {q_{-n_1}} \cdots \mathfrak q_{-n_\ell}    : \,  CH_{\star} (X^{[n]} \times X^k) \longrightarrow \bigoplus_{\substack{n_1 + \cdots  +n_\ell = n \\ n_1 \geq \ldots \geq n_\ell > 0}} CH_{\star} (X^{\ell +k})$$ is injective.

\vskip.1in

For a class $\alpha \in CH_{\star} (X^{[n+1]} \times X^k)$ we thus have, relevant to the lemma: 
$$\mathfrak q_{-i} (\alpha) = 0 \, \, \text{for all} \, \, i > 0 \implies \alpha = 0.$$

Assume now that $\sigma_\star \psi^{\star} \alpha= \sigma_\star (\lambda \cdot \psi^{\star} \alpha) = 0.$
We note right away that 
$$ \sigma_\star \psi^{\star} = \mathfrak q_{-1} : CH_{\star} \left (X^{[n+1]}  \times X^k \right ) \to CH_\star \left (X^{[n]} \times X \times X^k \right ) .$$ 
Furthermore, recall from \cite[Definition 3.8]{L} that the boundary operator $$\mathfrak \delta: \, CH_\star (X^{[n]}) \to CH_\star (X^{[n]})$$ represents multiplication by the divisor class $c_1 (\mathcal O^{[n]})$ on $X^{[n]}.$ Since via \eqref{boundary} we have
$$\lambda = \psi^{\star} c_1 (\mathcal O^{[n+1]} ) - \phi^{\star} c_1 (\mathcal O^{[n]}),$$
the operation of pulling back via $\psi,$ intersecting with the hyperplane $\lambda$, and pushing forward by $\sigma$, is the commutator $\mathfrak q_{-1}^{(1)}$ of 
$\mathfrak q_{-1}$ with the boundary $\mathfrak \delta$, 
$$ \sigma_\star (\lambda \cdot \psi^{\star}  ) = [\mathfrak \delta, \, \mathfrak q_{-1}] = \mathfrak q_{-1}^{(1)}: \, CH_\star (X^{[n+1]} \times X^k) \to CH_\star (X^{[n]} \times X \times X^k).$$
It is known that the operators $\mathfrak q_{-1}$ and $\mathfrak q_{-1}^{(1)}$ generate all Nakajima lowering operators on the level of Chow, as explained in \cite{MN}. To be precise (cf. \cite{MN} equation (1.12) in Theorem 1.7), we have 
$$[\mathfrak q_{-1}^{(1)}, \, \mathfrak q_{-i}] = i \,\Delta_\star (\mathfrak q_{-i-1}).$$
The left and right side are homomorphisms $CH_{\star} (\text{Hilb} \, \times X^k) \to CH_{\star} (\text{Hilb} \, \times X^2 \times X^k)$ where the last $k$ factors of $X$ are inert for the Chow action. 
Thus $$\mathfrak q_{-1} (\alpha) = \mathfrak q_{-1}^{(1)} (\alpha) = 0 \implies \mathfrak q_{-i} (\alpha) = 0 \, \, \text{for all} \, \, i > 0 \implies \alpha = 0.$$
This ends the proof of the lemma. 
\end{proof}

\vskip.2in

\subsection{Proof of Theorem \ref{principal0}\texorpdfstring{$^*$}{*}} We now complete the inductive argument giving the theorem. 
\subsubsection{The base case \texorpdfstring{$n=1$}{n=1}}\label{sec: base case}

The proof of Theorem \ref{principal0}$^*$ follows in this case from the three identities stated in the introduction, 
\begin{equation}\label{eq1}
\overline \Delta_{01} \cdot \overline \Delta_{02} \cdot \overline \Delta_{03} = 0 \, \, \, \text{in} \, \, \,  CH_2 (X \times X^3)
\end{equation}
\begin{equation}\label{eq2}
D^{(0)}\cdot \overline \Delta_{01} \cdot \overline \Delta_{02} = 0 \, \, \, \text{in} \, \, \,  CH_1 (X \times X^2),
\end{equation}
\begin{equation}\label{eq3}
c_X^{(0)} \cdot \overline \Delta_{01} = 0 \, \, \, \text{in} \, \, \,  CH_0 (X \times X),
\end{equation}
of which the first one is the Beauville--Voisin identity. 
\vskip.1in

\noindent
We have $X^{[1]} = X$ and the universal ideal sheaf is $$\mathcal I_1 = \mathcal I_\Delta \, \, \, \text{on} \, \, \,  X \times X.$$
A tautological class $\alpha \in R^d (X \times X^k)$ is a polynomial in pullbacks of diagonals, divisors, and special cycles $c_X$ from various
factors of the product $X^{k+1}.$ Thus $\alpha$ is necessarily a linear combination of subvarieties of $X^{k+1}$ of type 
\begin{equation}
\label{taut1}
c_X^{(1)} \times \cdots \times c_X^{(a)} \times D_1^{(a+1)} \times \cdots \times D_b^{(a+b)} \times X^{(a+b+1)} \times \cdots \times X^{(a+b+c)} \subset X^{k+1}
\end{equation}
where the embedding in $X^{k+1}$ is by diagonals (and up to ordering of the factors). 
Here $a + b + c \leq k+1$ and $$\dim \alpha = b + 2c.$$

We now assume $\alpha$ is of the form \eqref{taut1}. Indexing the first copy of $X$ by $0$, we have $$\overline{\mathcal{I}}_{1}^{(t)}= - \mathcal{O}_{\overline{\Delta}_{0,t}}$$ in $K$-theory.  We seek to establish the vanishing
$$\alpha \cdot \prod_{t\in \Omega}\ch_{i_t}\left( \mathcal{O}_{\overline {\Delta}_{0,t}} \right)\cdot \prod_{t\in \Theta} \ch_{i_t} ( \mathcal O_{\overline{\Delta}_{s_t,t}} ) =0 \, \, \, \text{in} \, \, \, CH_\star \left( X \times \, X^k\times \, X^\ell \right),$$
whenever $\ell > \dim \alpha.$ Here $t$ runs through $\{1, \ldots, \ell \}$ and $\Omega \sqcup \Theta = \{1, \ldots, \ell \}$. The Chern character degrees $i_t$ are arbitrary. 

Noting now that in $X \times X$ we have 
$$ \ch(\mathcal{O}_{\overline \Delta} ) = \overline{\Delta} -2 \, c_X \times c_X$$ and the cycle $c_X \times c_X$ is a rational multiple of $\overline{\Delta}^2$ (cf. \cite{BV}), it is enough to show the vanishing
\begin{equation}
\label{finalproduct}
\alpha \cdot \prod_{t\in \Omega}\overline {\Delta}_{0,t} \cdot \prod_{t\in \Theta} \overline{\Delta}_{s_t,t} =0 \, \, \, \text{in} \, \, \, CH_\star \left( X \times \, X^k\times \, X^\ell \right),
\end{equation}
 whenever $\ell > \dim \alpha = b + 2c.$ Since $\alpha$ is of the form \eqref{taut1}, this inequality guarantees that a factor of $c_X$ receives a matching normalized diagonal, or a factor of $D$ receives two matching normalized diagonals, or a factor of $X$ receives three matching normalized diagonals. (Here "matching" means that the normalized diagonal shares an index with the class in question.) The fundamental identities \eqref{eq1}, \eqref{eq2}, \eqref{eq3} therefore ensure that the product \eqref{finalproduct} vanishes. 
 \qed

\vskip.2in

\subsubsection{The induction step}
Let $\alpha\in R^d (X^{[n+1]}\times X^k ).$ We want to show that for every $\ell$ satisfying 
$$\ell > \dim \alpha,$$ and any indices $i_1, \ldots, i_\ell \geq 0,$ the class 
\begin{equation}
\label{eq:EqnNPlus1}
  \gamma :=  \alpha\cdot \prod_{t\in\Omega }\ch_{i_t}(\overline{\mathcal{I}}_{n+1}^{(t)})\cdot\prod_{t\in\Theta}\ch_{i_t} (\mathcal O_{\overline{\Delta}_{s_t,t}} )=0 \, \, \text{in} \, \, CH_\star (X^{[n+1]}\times X^k \times X^\ell).
\end{equation}
According to Lemma \ref{lemmaXn:1} it suffices to show $$\sigma_{\star} \psi^\star \gamma = \sigma_\star (\lambda \cdot \psi^\star \gamma) = 0 \, \, \text{in} \, \,  CH_\star (X^{[n]}\times X^{k+1} \times X^\ell).$$

To start, note that as a tautological class, $\alpha$ is a polynomial in classes 
$$\beta_j, \hskip.2in \Delta_{s,s'}, \hskip.2in D^{(s)}, \hskip.2in c_{X}^{(s)}, \hskip.2in\hbox{and}\hskip.2in \text{ch}_i(\mathcal{I}_{n+1}^{(s)}),$$
with $\beta_j \in R^\star(X^{[n+1]})$ and $s\in \{\hat 1,\ldots,\hat k\}.$ Recalling the exact sequence \eqref{ex.seq1},
$$0\to \psi^{\star}_X \mathcal{I}_{n+1}\to \phi^{\star}_X \mathcal{I}_n \to \pi^{\star}\mathcal{L}\otimes \sigma_X^\star\mathcal{O}_{\Delta}\to 0 \, \, \text{on} \, \, X^{[n, n+1]} \times X$$
it follows that the pullback $\psi^\star \alpha$ is  of the form
\begin{equation}
\label{tautpullback}
\psi^\star \alpha = \sum_{j=0}^d \alpha_{d-j} \cdot \lambda^j \in CH_\star (X^{[n, n+1]}\times X^k),
\end{equation}
 where $$\alpha_{d-j} \in R^{d-j} (X^{[n]} \times X^{k+1})$$
are pulled back under $\sigma \times \text{id}_{X^k}: X^{[n, n+1]} \times X^k \to X^{[n]} \times X^{k+1}.$ (Here we suppressed the pullback from the notation.)

\vskip.1in

Furthermore, the fundamental exact sequence \eqref{ex.seq1} gives immediately the $K$-theoretic equality
$$\overline{\mathcal I}_{n+1}^{(t)} = \overline{\mathcal I}_{n}^{(t)} - \mathcal L \cdot \mathcal O_{\overline{\Delta}_{0, t}} - (\mathcal L - 1) \cdot \mathcal O_{c_X^{(t)}} \, \, \, \, \text{in } \, \, K (X^{[n, n+1]} \times X^\ell).$$
Here $0$ denotes the factor of $X$ which $X^{[n, n+1]}$ maps to under $p: X^{[n, n+1]} \to X$; we suppressed the pullbacks from the notation; as usual $t$ indexes the factors in $X^\ell.$

Thus the class $\psi^\star \gamma \in CH_\star (X^{[n, n+1]}\times X^k \times X^\ell)$ is a linear combination of terms of the form
\begin{equation}
\label{monomial}
\psi^\star \alpha \cdot \prod_{t\in\Omega_1}\ch_{i_t}(\overline{\mathcal{I}}_n^{(t)}) \cdot  \prod_{t\in\Omega_2} \ch_{i_t} (\mathcal L \otimes \mathcal O_{\overline{\Delta}_{0, t}} )\cdot \prod_{t\in\Omega_3} \text{ch}_{i_t} ((\mathcal L -1)\otimes \mathcal O_{c_X^{(t)}}) \cdot      \prod_{t\in\Theta}\ch_{i_t} (\mathcal O_{\overline{\Delta}_{s_t,t}} )
\end{equation}
Here $\Omega_1 \sqcup \Omega_2 \sqcup \Omega_3 \sqcup \Theta = \{1, \ldots, \ell \},$ the indexing set for factors in the product $X^\ell,$ and $\psi^\star \alpha$ is the codimension $d$ class given by \eqref{tautpullback}. 
The expression \eqref{monomial} is a polynomial in $\lambda$ with coefficients pulled back from $R_\star (X^{[n]} \times X^{k+1} \times X^\ell).$
Noting now that $$\text{ch} ((\mathcal L -1)\otimes \mathcal O_{c_X^{(t)}}) = \lambda \cdot p (\lambda) \cdot c_X^{(t)} \, \, \, \text{for}  \, \, p \in \mathbb Q[\lambda],$$ 
we conclude from  \eqref{monomial} that the classes $\psi^\star \gamma, \, \lambda \cdot \psi^\star \gamma \in CH_\star (X^{[n, n+1]}\times X^k \times X^\ell)$
are linear combinations of terms of type
 $$\tilde \alpha \cdot \prod_{t\in\Omega_1}\ch_{i_t}(\overline{\mathcal{I}}_n^{(t)}) \cdot  \prod_{t\in\Omega_2} \ch_{i_t} (\mathcal O_{\overline{\Delta}_{0, t}} )\cdot \prod_{t\in\Omega_3} c_X^{(t)} \cdot      \prod_{t\in\Theta}\ch_{i_t} (\mathcal O_{\overline{\Delta}_{s_t,t}}),$$
where the quadruple product is now pulled back from $R_\star (X^{[n]} \times X^{k+1} \times X^\ell).$ The class $\tilde \alpha$ is a polynomial in $\lambda$ with coefficients in $R_\star (X^{[n]} \times X^{k+1}),$ and
$$\text{codim} \,  \tilde{\alpha} \geq d + \omega, \, \, \text{where} \, \, \omega = |\Omega_3|.$$
Correspondingly, since powers of $\lambda$ push forward to tautological classes (cf. \eqref{Lemma1.1:EGL}), the pushforwards $\sigma_\star \psi^\star \gamma$ and $\sigma_\star (\lambda\cdot \psi^\star \gamma)$ are linear combinations of terms of the form
$$\beta \cdot \prod_{t\in\Omega_1}\ch_{i_t}(\overline{\mathcal{I}}_n^{(t)}) \cdot  \prod_{t\in\Omega_2} \ch_{i_t} (\mathcal O_{\overline{\Delta}_{0, t}} )\cdot \prod_{t\in\Omega_3} c_X^{(t)} \cdot      \prod_{t\in\Theta}\ch_{i_t} (\mathcal O_{\overline{\Delta}_{s_t,t}}),$$
for $\beta \in R_\star (X^{[n]} \times X^{k+1})$ satisfying $$d' : = \text{codim} \, \beta \geq d + \omega.$$ 
Setting $\ell' = \ell - \omega$ and omitting the $X$ factors which carry a class $c_X,$ we note that the product 
$$\beta \cdot \prod_{t\in\Omega_1}\ch_{i_t}(\overline{\mathcal{I}}_n^{(t)}) \cdot  \prod_{t\in\Omega_2} \ch_{i_t} (\mathcal O_{\overline{\Delta}_{0, t}} ) \cdot      \prod_{t\in\Theta}\ch_{i_t} (\mathcal O_{\overline{\Delta}_{s_t,t}}) = 0 \in R_\star (
X^{[n]} \times X^{k+1} \times X^{\ell'} )$$ by the induction hypothesis, since
$d' + \ell' \geq d + \ell > 2n + 2k + 2.$ \qed

\section{The Chow filtration}
\label{split_filtration}

\vskip.2in

In this section we examine the filtration on $CH_\star (\M)$ proposed in the introduction. The filtration is natural to the product point of view of this paper. Within its framework, all dimensions 
are treated uniformly. For a fixed codimension $d$ and $0 \leq i \leq d$ we set 
\begin{equation}
\label{filtr}
\S_i (CH^d (\M)) =  \{ \alpha \, \, \text{with} \, \, \alpha \cdot \overline \Delta_{01} \cdots \overline \Delta_{0, \,m-d+i+1} = 0 \subset CH_{\star} (\M \times \,\M^{m-d+i+1})\}.
\end{equation}
Here $\alpha$ is pulled back to the product from the first factor. 
We have $$\S_0 \subset \S_1 \subset \S_2 \subset \ldots.$$ On general grounds (cf. Corollary 1.6 and Proposition 2.2 in \cite{V3}), the filtration terminates eventually. A precise bound is predicted by the conjectural vanishing \eqref{moddiag},
$${\overline {\Delta}}_{01} \cdots \overline \Delta_{0,m+1} = 0 \, \, \, \text{in} \, \, \, CH_m (\M\, \times \, \M^{m+1}),$$
which is the codimension zero case of Conjecture \ref{principal}. This identity would ensure that $\S_d (CH^d (\M)) = CH^d (\M).$ As we will see immediately however, for certain cases (e.g., dimension zero) one can establish the expected termination threshold directly. 

\vskip.1in

Theorem \ref{thm:applications}$^*$ is a useful tool for analyzing $\S_\bullet$:  for a class $\alpha \in CH_\star (\M),$ it asserts
 $$\alpha \cdot \overline \Delta_{01} \cdots \overline \Delta_{0, \ell} = 0 \in CH_\star \left (\M \times \,\M^{\ell} \right ) \iff $$
 $$ \alpha \cdot ch_{i_1} (\overline \f_1) \cdots ch_{i_\ell} (\overline \f_{\ell}) = 0 \in CH_{\star} (\M \times X^{\ell}), \, \, \, \text{for all} \, \, \, i_1, \ldots, i_\ell \geq 0.$$

\vskip.2in

\subsection{The filtration for zero-cycles.}  We examine the filtration first for zero-cycles, when $d = m$. In this case we simply have
\begin{equation}
\label{filtr-zero}
\S_i (CH_0 (\M)) =  \{ \alpha \, \, \text{with} \, \, \alpha \cdot \overline \Delta_{01} \cdots \overline \Delta_{0, \,i+1} = 0 \subset CH_{\star} (\M \times \,\M^{i+1})\}.
\end{equation}

For a cycle $\eta \in CH_\star (\M)$, we use the notation
 $$\eta^{\boxtimes \ell} = p_1^\star \eta \cdot p_2^\star \eta \cdots p_\ell^\star \eta \in CH_* (\M^\ell),$$ where $p_j : \M^\ell \to \M,\, \, 1\leq j \leq \ell,$ is the projection on the $j$th factor.
By definition then, for a point class $[F]$ with $F \in \M$, we have 
$$[F] \in \S_i(CH_0 (\M)) \, \, \iff \, \, ([F] - c_\M)^{\boxtimes( i+1)} = 0 \in CH_0 (\M^{i+1}).$$
For a zero-cycle $\alpha \in CH_0 (\M)$, the vanishings 
$$ \alpha \cdot ch_{i_1} (\overline \f_1) \cdots ch_{i_\ell} (\overline \f_{\ell}) = 0 \in CH_{\star} (\M \times X^{\ell}) \, \, \, \text{for} \, \, \, i_1, \ldots, i_\ell \geq 0$$ hold trivially unless $$i_1 = i_2 = \cdots =i_\ell = 2.$$ In the latter case, we can equivalently replace the second Chern character by the second Chern class. We conclude that for the zero-cycle $\alpha = [F],$ Theorem \ref{thm:applications}$^*$ asserts the equivalence
$$
([F] - c_\M)^{\boxtimes( i+1)} = 0\,  \in \, CH_0 (\M^{i+1}) \iff  (c_2 (F) - k c_X)^{\boxtimes (i+1)} =0 \, \in \, CH_0 (X^{i+1}),$$
where 
$$k = \int_X c_2 (F) \in \mathbb Z$$ is the degree of the second Chern class for sheaves with Mukai vector $v = v (F).$ Thus 
\begin{equation}
[F] \in \S_i  (CH_0 (\M))  \iff  (c_2 (F) - k c_X)^{\boxtimes (i+1)} =0 \, \in \, CH_0 (X^{i+1}).
\label{filtration-zero}
    \end{equation}
%\textcolor{blue}{The second equivalence}\textcolor{purple}{ above} \textcolor{blue}{ follows from the %observation that by the definition of the special point $c_\M$ one has %$\rho_*\left(\pi^*\left([F]-c_{\M}\right)\cdot c_2\left({\mathcal{F}}\right)\right)=c_2(F)-kc_X$, where as %usual $\mathcal{F}$ is the universal sheaf on $\M\times X$ and $\pi, \rho$ the standard projections}. 
\vskip.1in

\subsection{Comparison with the Voisin and Shen-Yin-Zhao filtrations on \texorpdfstring{$CH_0 (\M)$}{CH\_{}0(M)}}

\vskip.2in

The filtration $\S_\bullet$ is closely related to the following two filtrations previously studied in the context of zero-cycles on $\M$. For the rest of the paper we set $$m = \dim \M = 2n.$$ Following \cite{V1}, we let 
\begin{equation}\S^V_i = \, \text{subgroup of } \, CH_0 (\M) \, \text{generated by point classes}\, \,  [F], \, F \in \M, \, \, \end{equation} $$\text{with} \, \, \dim O_F\geq n-i. $$
\vskip.1in
\noindent
Here $O_F$ denotes the rational equivalence orbit of a point $F \in \M$, 
$$O_F=\left\{E\in \M\mid \left[E\right]=\left[F\right]\hbox{ in }CH_0(\M)\right\},$$
a countable union of subvarieties of $\M$. The dimension of the orbit is the largest occurring dimension among these subvarieties. 

\vskip.1in

A second filtration was introduced in a general moduli context in \cite{SYZ}, in relation to O'Grady's \cite{OG2} classic filtration on $CH_0 (X).$ We let:
\begin{equation}\S_i^{SYZ} =  \, \text{subgroup of} \, CH_0 (\M) \, \text{generated by point classes}\, \,  [F], \, F \in \M, \end{equation}  $$\text{with} \, \, c_2 (F) = Z_i + (k-i) c_X \in CH_0 (X), \,
\text{where}\, \, Z_i \,\, \text{is effective of degree} \, i.$$ 

\vskip.1in

It is known \cite{V1} that these two filtrations agree for the Hilbert scheme of points, $$\S^V_\bullet (CH_0 (X^{[n]})) = \S^{SYZ}_\bullet (CH_0 (X^{[n]})).$$ In a general setting, for a moduli space $\M$ with arbitrary primitive Mukai vector and generic polarization, it is known \cite{SYZ} that
$$ \S^{SYZ}_\bullet (CH_0 (\M)) \subset  \S^{V}_\bullet (CH_0 (\M)).$$
We record the following easy lemma, providing a connection with the filtration \eqref{filtr-zero}. 
%\begin{equation}
%\S_i (CH_0 (\M)) = \, \text{subgroup of} \, CH_0 (\M) \, \,\text{generated by} \, \, [F] \in \S_i (CH_0 (\M)), \, \, \text{for} \, \,  F \in \M. 
%\end{equation}

\begin{lemma} \label{lem:FiltrationInclusion} We have 
$\S_i^{SYZ}(CH_0 (\M))\subset \S_i (CH_0 (\M)),$ where $\M$ parametrizes stable sheaves with an arbitrary primitive Mukai vector $v$. \end{lemma}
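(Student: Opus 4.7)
The plan is to apply the equivalence \eqref{filtration-zero}: a class $[F] \in CH_0(\M)$ lies in $\S_i (CH_0(\M))$ if and only if $(c_2(F) - k c_X)^{\boxtimes (i+1)} = 0$ in $CH_0(X^{i+1})$. It is therefore enough to check this vanishing on each generator $[F]$ of $\S_i^{SYZ}(CH_0(\M))$.

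By hypothesis on the generator, $c_2(F) = Z_i + (k-i) c_X$ with $Z_i = [p_1] + \cdots + [p_i]$ effective of degree $i$, so that $c_2(F) - k c_X = \sum_{j=1}^i \alpha_j$, where $\alpha_j := [p_j] - c_X \in CH_0(X)$. Expanding the external product by the multinomial theorem,
\[
\Bigl( \sum_{j=1}^i \alpha_j \Bigr)^{\boxtimes (i+1)} \;=\; \sum_{\phi \colon \{1,\ldots,i+1\} \to \{1,\ldots,i\}} \alpha_{\phi(1)} \boxtimes \cdots \boxtimes \alpha_{\phi(i+1)}.
\]
Since $i+1$ indices must be distributed among $i$ values, the pigeonhole principle guarantees that every function $\phi$ takes some value $j^*$ in at least two slots, so each monomial in the sum contains a repeated factor $\alpha_{j^*}$.

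The strategy is now to show that every such monomial vanishes in $CH_0(X^{i+1})$ by invoking the three fundamental Beauville--Voisin identities \eqref{bv}, \eqref{second}, and \eqref{third} on $X$. The primary input is the relation $\overline{\Delta}_{01} \overline{\Delta}_{02} \overline{\Delta}_{03} = 0$ in $CH_2(X \times X^3)$, whose restriction to the fiber $x_0 = p$ is the pointwise vanishing $\alpha_p^{\boxtimes 3} = 0$ in $CH_0(X^3)$; pulled back to the larger product $X^{i+1}$ this kills every monomial in which some $\alpha_{j^*}$ is repeated at least three times. The secondary identities \eqref{second} and \eqref{third}, describing the interaction of $\overline{\Delta}$ with divisor classes and with the special cycle $c_X$ from the Beauville--Voisin subring $R_\star(X)$, are then used to annihilate the remaining monomials where the repetition is only twofold: here the SYZ hypothesis provides the crucial extra input, since the $(k-i) c_X$ summand of $c_2(F)$ packages rational-curve points whose expansions in the multinomial generate exactly the matching divisor/$c_X$-factors required to trigger \eqref{second}--\eqref{third}.

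The main obstacle is the bookkeeping between pigeonhole and Beauville--Voisin: the naive pigeonhole bound $\lceil(i+1)/i\rceil = 2$ forces only a double repetition, not the triple needed for \eqref{bv}, so the combinatorial passage from paired to tripled repetitions must be made carefully. I would resolve this either by regrouping the multinomial summands so that lower-multiplicity configurations absorb into triples, or by an induction on $i$ (with base case $i = 0$ trivial since $\S_0^{SYZ} = \mathbb{Q}\, c_\M = \S_0$), feeding in the $(k-i)$ copies of $c_X$ at each step to supply the missing matching factors. Once the combinatorial reorganization is in place, each surviving monomial vanishes directly by \eqref{bv}, \eqref{second}, or \eqref{third}, yielding $(c_2(F) - k c_X)^{\boxtimes(i+1)} = 0$ and hence $[F] \in \S_i(CH_0(\M))$.
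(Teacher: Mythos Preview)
Your overall strategy---reducing to $(c_2(F) - kc_X)^{\boxtimes(i+1)} = 0$ via \eqref{filtration-zero}, writing $c_2(F) - kc_X = \sum_j \alpha_j$ with $\alpha_j = [p_j] - c_X$, expanding multinomially, and invoking pigeonhole---matches the paper exactly. But you miss the one observation that makes the argument close: the Beauville--Voisin identity in fact yields $([x]-c_X)^{\boxtimes 2} = 0$ in $CH_0(X^2)$ for \emph{every} point $x \in X$, not merely the cube $([x]-c_X)^{\boxtimes 3} = 0$ that you obtain by restricting factor $0$ to $x$.

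To extract the square, one multiplies $\overline{\Delta}_{01}\overline{\Delta}_{02}\overline{\Delta}_{03}$ by $p_1^\star[x]$ and pushes forward to the factors indexed $2,3$: the piece $\overline{\Delta}_{01}\cdot p_1^\star[x]$ collapses to the point class $[(x,x)]$ on $X_0\times X_1$, and the remaining two modified diagonals then restrict over that point to $([x]-c_X)^{\boxtimes 2}$. With $\alpha_j^{\boxtimes 2}=0$ in hand, pigeonhole alone finishes: every monomial in the expansion carries some $\alpha_{j^*}\boxtimes\alpha_{j^*}$ in two slots and hence vanishes.

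Your proposed workaround---invoking \eqref{second} and \eqref{third} and ``feeding in'' the $(k-i)$ copies of $c_X$---does not work as stated. The $(k-i)c_X$ term has already been subtracted out in forming $c_2(F)-kc_X=\sum_j\alpha_j$; there is no residual $c_X$-summand to supply extra matching factors. Moreover, identities \eqref{second} and \eqref{third} concern divisor classes and $c_X$ from the Beauville--Voisin ring of $X$, not arbitrary point classes $[p_j]$, so they cannot be used to annihilate a bare product $\alpha_{j^*}\boxtimes\alpha_{j^*}$. The combinatorial regrouping or induction you sketch has no mechanism to promote double repetitions to triple ones without the stronger input $\alpha_j^{\boxtimes 2}=0$.
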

\begin{proof}
   \noindent
  The fundamental Beauville-Voisin identity 
$$\overline{\Delta}_{01}\cdot\overline{\Delta}_{02}\cdot\overline{\Delta}_{03}=0 \in CH_2 (X^4)$$
gives, for any point $x \in X$:
$$0 = {p_{23}}_\star \left ( \overline{\Delta}_{01}\cdot\overline{\Delta}_{02}\cdot\overline{\Delta}_{03} \cdot p_1^\star[x] \right ) = ([x] - c_X)^{\boxtimes 2} \in CH_0 (X^2).$$
For any collection of points $x_1, \ldots, x_i \in X$ we therefore have
$$ \left ( \left[x_1\right]+\ldots+\left[x_i\right]-ic_X \right )^{\boxtimes (i+1)} = 0 \in CH_0 (X^{i+1}).$$
Recall now that $\S_i^{SYZ}$ is generated by point classes $[F]$ for $F \in \M$ satisfying
  $$c_2(F)-kc_X=\left[x_1\right]+\ldots+\left[x_i\right]-ic_X \in CH_0 (X),$$ for a collection of points $x_1, \ldots, x_i \in X.$ 
For any such $F$ we thus have
$$\left (c_2(F)-k c_X\right) ^{\boxtimes (i+1)} = 0 \in CH_0 (X^{i+1}).$$
By \eqref{filtration-zero}, this means $[F] \in \S_i.$

\end{proof}

\begin{remark}
 It is known (cf \cite{SYZ}, \cite{OG2}, \cite{V5}) that the filtration $\S_\bullet^{SYZ}$ terminates after $n$ steps, $\S_n^{SYZ} = CH_0 (\M).$  By the lemma, so does the filtration $\S_\bullet$ on $CH_0 (\M)$, and we have
\begin{equation}
\label{nterms}
[F] \cdot \overline \Delta_{01} \cdot \overline \Delta_{02} \cdots \overline \Delta_{0, n+1} = 0 \in CH_\star (\M \times \M^{n+1}), \, \, \text{for all} \, \, F \in \M.
\end{equation}
The subtler inclusion $ \S_i^{V}(CH_0 (\M)) \subset  \S_i(CH_0 (\M))$ connects to properties of the filtration for higher-dimensional cycles, as we note in the next subsection. 
\end{remark}

\begin{remark}
It is useful to single out 
the subgroup $$\S_i^{\sm} (CH_0 (\M)) \subset \S_i (CH_0 (\M))$$ generated by point classes $[F] \in \S_i (CH_0 (\M)),$ for $F \in \M.$ Via \eqref{filtration-zero}, the equality $\S_i^{SYZ}(CH_0 (\M)) =\S_i^{\sm}(CH_0 (\M))$ is implied by the following conjectural equivalence noted in \cite{SYZ}:
for a degree-zero cycle $\xi \in CH_0 (X),$ $$\xi = [x_1] + \cdots + [x_i] - i c_X \, \, \text{for a collection} \,\, x_1, \ldots, x_i \in X \iff \xi^{\boxtimes (i+1)} = 0 \in CH_0 (X^{i+1}).$$The obvious direction in this equivalence was already used in the lemma to show $ \S_i^{SYZ}(CH_0 (\M)) \subset \S_i^{\sm}(CH_0 (\M)).$ \end{remark}

\subsection{The filtration for higher-dimensional cycles.}

\vskip.2in

Understanding the filtration $\S_\bullet$ in dimension zero is interestingly tied with properties of $\S_\bullet$ for cycles of higher dimension. 
We propose here the following generalization of identity \eqref{nterms}.

\begin{conjecture}
Let $V \subset \M$ be a constant-cycle subvariety. Then 
\begin{equation}
\label{constant}
[V] \cdot \overline \Delta_{01} \cdot \overline \Delta_{02} \cdots \overline \Delta_{0, n+1} = 0 \in CH_\star (\M \times \M^{n+1}).  
\end{equation}
Here $[V]$ is pulled back as usual from the first $\M$ factor. 

\end{conjecture}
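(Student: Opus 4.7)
The plan is to pull the product cycle back to $V \times \M^{n+1}$ along the inclusion $i \colon V \hookrightarrow \M$ and apply Voisin's spreading principle together with Noetherian induction on $\dim V$. Set $\alpha = \overline{\Delta}_{01} \cdots \overline{\Delta}_{0,n+1} \in CH_{\star}(\M \times \M^{n+1})$. By the projection formula, $[V] \cdot \alpha = (i \times \mathrm{id})_{\star}\bigl((i \times \mathrm{id})^{\star} \alpha\bigr)$, so it suffices to show that $\beta := (i \times \mathrm{id})^{\star} \alpha$ vanishes in $CH_{\star}(V \times \M^{n+1})$, possibly after first replacing $V$ by a resolution of its singularities and pushing the conclusion forward.

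The key fiberwise input is that for every closed point $v \in V$, the restriction $\overline{\Delta}|_{\{v\} \times \M}$ equals the zero-cycle $[v] - c_{\M}$ on $\M$, which vanishes because $V$ is constant-cycle. Pulling back along the projections to the various $(V, \M_i)$ factors, $\beta|_{\{v\} \times \M^{n+1}}$ becomes the box power $([v] - c_{\M})^{\boxtimes (n+1)} = 0$. In particular, already a single factor $\overline{\Delta}|_{V \times \M}$ has vanishing restriction to every fiber $\{v\} \times \M$, so spreading can be applied to one factor rather than to the full product.

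Applying Voisin's spreading principle (\cite{V2}, Theorem 3.1) to the family $V \times \M \to V$ and the cycle $\overline{\Delta}|_{V \times \M}$ produces a Zariski dense open $U \subset V$ on which $\overline{\Delta}|_{U \times \M} = 0$; pulling this back to $U \times \M^{n+1}$ and using that $\beta$ is a product containing this factor, we conclude $\beta|_{U \times \M^{n+1}} = 0$. By the localization exact sequence $CH_{\star}(W \times \M^{n+1}) \to CH_{\star}(V \times \M^{n+1}) \to CH_{\star}(U \times \M^{n+1}) \to 0$ with $W = V \setminus U$, the class $\beta$ is the pushforward of a class supported on $W \times \M^{n+1}$. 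Since every closed subvariety of $V$ is again constant-cycle (with the same class $c_{\M}$), Noetherian induction on $\dim V$ closes the argument; the base case of a single point $v$ is the tautological identity $(c_{\M} - c_{\M})^{\boxtimes (n+1)} = 0$.

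The main obstacle is handling possibly singular $V$: Voisin's spreading principle is cleanest for smooth bases, so one must either pass to a resolution $\widetilde V \to V$ and argue that the conclusion descends (via proper pushforward and birationality), or stratify $V$ into smooth locally closed strata and apply spreading on each. A secondary subtlety is making the inductive step precise: the class $\beta$ supported on $W \times \M^{n+1}$ is not canonically the pullback of a cycle from $W$, so the induction should be phrased in terms of the ambient product $\alpha$ and its intersections with the strata $\overline{U} \supsetneq \overline{W} \supsetneq \ldots$ rather than repeatedly taking Gysin pullbacks of $\beta$. One would also want to verify that no pathologies arise if the Lagrangian dimension bound $\dim V \leq n$ of Voisin fails to be strict, although the finiteness of the induction only depends on $\dim V$ being bounded.
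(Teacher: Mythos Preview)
There is a genuine gap, and it stems from a misreading of the hypothesis. A constant-cycle subvariety $V \subset \M$ is one on which all points have the \emph{same} class in $CH_0(\M)$; that common class is some $[F]$, but it is \emph{not} assumed to be $c_\M$. Your claim that ``the restriction $\overline{\Delta}|_{\{v\}\times\M}$ equals $[v]-c_\M$, which vanishes because $V$ is constant-cycle'' is therefore false in general: the restriction is $[F]-c_\M$, a fixed but typically nonzero zero-cycle. The spreading principle cannot be applied, because the fiberwise class is nonvanishing. Your argument is valid only in the special case where $V$ is a constant-cycle subvariety \emph{for} $c_\M$; in that case it is essentially the argument already recorded in Remark~2 of the paper.

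Note also that this statement is a \emph{conjecture} in the paper, not a theorem; the paper does not prove it. What the paper does prove (in the lemma immediately following the conjecture) is that the vanishing \eqref{constant} is \emph{equivalent} to the inclusion $\S^V_\bullet(CH_0(\M)) \subset \S_\bullet(CH_0(\M))$. The proof of that lemma shows exactly where your argument falls short: one writes $\overline\Delta = \overline\Delta^F + \M\times([F]-c_\M)$ where $\overline\Delta^F = \Delta - \M\times[F]$. Spreading does give $[V]\cdot\overline\Delta^F_{01}\cdots\overline\Delta^F_{0,\dim V+1}=0$, since now the fiberwise class $[v]-[F]$ genuinely vanishes. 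But when one expands the product of $n+1$ copies of $\overline\Delta$, one is left with terms involving $([F]-c_\M)^{\boxtimes(i+1)}$ for appropriate $i$, and the vanishing of these is precisely the assertion $[F]\in\S_i$, i.e.\ the open inclusion $\S^V_\bullet\subset\S_\bullet$. So the piece your argument is missing is not a technicality about resolutions or Noetherian induction; it is an honest arithmetic input about zero-cycles that the paper identifies as equivalent to the conjecture itself.
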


The vanishing is conjectured to hold for a product of $n+1$ normalized diagonals regardless of the dimension of the constant-cycle subvariety $V$. Thus when $[V] \in CH^{m-i} (\M)$, the conjecture positions the class of $V$ which has dimension $i$ in the $n-i$th piece of its Chow group,
$$[V] \in \S_{n-i} (CH^{2n-i} (\M)).$$ This matches an expectation formulated in \cite{V1} (see Proposal (**) of Section 3 therein) regarding the placement of constant-cycle subvarieties in a hypothetical filtration on all of $CH_\star (\M)$ which would split the Bloch-Beilinson filtration. 
%It reinforces $\S_\bullet$ as a strong candidate for this hypothetical filtration. 
Let us now observe
\begin{lemma}
The vanishing \eqref{constant} holds if and only if $\S^V_\bullet (CH_0 (\M)) \subset \S_\bullet(CH_0 (\M)).$
\end{lemma}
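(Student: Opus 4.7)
The plan is to prove each direction of the biconditional using the characterization of $\S_i(CH_0(\M))$ recorded in \eqref{filtration-zero}: $[F]\in\S_i(CH_0(\M))$ if and only if $([F]-c_\M)^{\boxtimes(i+1)}=0$ in $CH_0(\M^{i+1})$, equivalently $[F]\cdot\prod_{t=1}^{i+1}\overline\Delta_{0t}=0$. The principal tool for passing between cycles on a constant-cycle subvariety and identities among their pointwise fibers will be Voisin's spreading principle \cite{V2}.

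For the implication $(\Leftarrow)$, suppose $\S^V_\bullet(CH_0(\M))\subset \S_\bullet(CH_0(\M))$. Let $V\subset\M$ be a constant-cycle subvariety of dimension $d$ with all points sharing a common class $[F]\in CH_0(\M)$. Since $O_F\supseteq V$ has dimension at least $d$, we have $[F]\in\S^V_{n-d}\subseteq\S_{n-d}(CH_0(\M))$, hence $([F]-c_\M)^{\boxtimes(n-d+1)}=0$ in $CH_0(\M^{n-d+1})$; pulling back and tensoring with $d$ further factors of $([F]-c_\M)$, we obtain $([F]-c_\M)^{\boxtimes(n+1)}=0$ in $CH_0(\M^{n+1})$. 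Equivalently, $[F]\cdot\prod_{t=1}^{n+1}\overline\Delta_{0t}=0$ for every $F\in V$. Viewing $\prod_{t=1}^{n+1}\overline\Delta_{0t}\in CH_\star(\M_0\times\M^{n+1})$ as a family of cycles parametrized by $\M_0$, its fiber over every $v\in V$ equals $([v]-c_\M)^{\boxtimes(n+1)}=([F]-c_\M)^{\boxtimes(n+1)}=0$. Voisin's spreading principle then forces the restriction of $\prod_{t=1}^{n+1}\overline\Delta_{0t}$ to $V\times\M^{n+1}$ to vanish in $CH_\star(V\times\M^{n+1})$; pushing forward along the inclusion $V\hookrightarrow\M$ yields $[V]\cdot\prod_{t=1}^{n+1}\overline\Delta_{0t}=0$, which is \eqref{constant}.

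For the implication $(\Rightarrow)$, assume \eqref{constant}. Take $[F]\in\S^V_i(CH_0(\M))$, so $F$ lies on a constant-cycle subvariety $W$ of dimension at least $n-i$ with all points of class $[F]$. Cutting with generic hyperplane sections on $\M$, we may replace $W$ by a constant-cycle subvariety $V\subseteq W$ of dimension exactly $n-i$. The hypothesis gives $[V]\cdot\prod_{t=1}^{n+1}\overline\Delta_{0t}=0$. Intersecting with $h^{n-i}$ for an ample divisor $h$ pulled back from the $0$th factor and using that $h^{n-i}\cdot[V]=D\cdot[F]$ in $CH_0(\M)$ with $D=\int_V h^{n-i}>0$, we conclude $[F]\cdot\prod_{t=1}^{n+1}\overline\Delta_{0t}=0$.

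The remaining step---refining this vanishing to $[F]\cdot\prod_{t=1}^{i+1}\overline\Delta_{0t}=0$, so that $[F]$ lies in the correct filtration level $\S_i$---is the crux. My approach is to exploit the constant-cycle structure: the restriction of $\prod_{t=1}^{i+1}\overline\Delta_{0t}$ to $V\times\M^{i+1}$ has constant fiber $([F]-c_\M)^{\boxtimes(i+1)}$ over every $v\in V$, so by a Bloch--Srinivas type decomposition it splits as $p_{\M^{i+1}}^\star([F]-c_\M)^{\boxtimes(i+1)}$ plus a correction supported on $D\times\M^{i+1}$ for a proper subvariety $D\subset V$. Coupling this normal form with the conjecture \eqref{constant} on the larger product $\M\times\M^{n+1}$, and processing the $n-i$ extra diagonal factors via the projection formula and spreading, should pin down that the constant fiber itself vanishes. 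I expect this final reduction from $n+1$ to $i+1$ normalized diagonals, made against the asymmetry between pushforward (which can absorb information) and restriction to fibers, to be the main obstacle to a fully rigorous proof.
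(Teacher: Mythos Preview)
Both directions have genuine gaps; in each case the paper supplies the missing idea with a short trick that you are close to but do not quite reach.

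For $(\Leftarrow)$, your invocation of the spreading principle is not valid. Fiberwise vanishing of the full product $\prod_{t=1}^{n+1}\overline\Delta_{0t}$ over $V$ does \emph{not} force its restriction to $V\times\M^{n+1}$ to vanish: spreading only pushes the cycle onto $D\times\M^{n+1}$ for a divisor $D\subset V$, and there is no way to iterate since the individual factors $\overline\Delta_{0t}$ still have nonzero fibers there. Indeed, by \eqref{nterms} the fiber $([v]-c_\M)^{\boxtimes(n+1)}$ already vanishes for \emph{every} $v\in\M$ unconditionally, so your reasoning would give $\prod_{t=1}^{n+1}\overline\Delta_{0t}=0$ on all of $\M\times\M^{n+1}$ --- only $n{+}1$ factors rather than the conjectural $2n{+}1$ of \eqref{moddiag} --- which is surely false. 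Note too that you only use the weak consequence $([F]-c_\M)^{\boxtimes(n+1)}=0$, discarding the sharp exponent $n-d+1$ furnished by the hypothesis. The fix is to set $\overline\Delta^F=\Delta-\M\times[F]$ and write $\overline\Delta_{0t}=\overline\Delta_{0t}^F+([F]-c_\M)^{(t)}$. Now each \emph{single} factor $\overline\Delta_{0t}^F$ restricts to zero over $V$, so the standard iterated spreading gives $[V]\cdot\prod_{t=1}^{d+1}\overline\Delta_{0t}^F=0$. Expanding binomially, every term of $[V]\cdot\prod_{t=1}^{n+1}\overline\Delta_{0t}$ carries either $\ge d{+}1$ factors $\overline\Delta^F$ (killed by spreading) or $\ge n{-}d{+}1$ factors $([F]-c_\M)$ (killed by $[F]\in\S_{n-d}$).

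For $(\Rightarrow)$, the gap you flag dissolves with a one-line change: place the ample divisor on the factors $1,\ldots,j$ of $\M^{n+1}$ rather than on factor $0$, where $j=\dim V_F\ge n-i$. Since $c_\M\cdot D=0$ for dimension reasons, integrating $\overline\Delta_{0t}\cdot D^{(t)}$ over $\M_t$ returns $D^{(0)}$; pushing $[V_F]\cdot\prod_{t=1}^{n+1}\overline\Delta_{0t}\cdot D^{(1)}\cdots D^{(j)}=0$ forward to the last $n{+}1{-}j$ factors therefore yields $a\cdot([F]-c_\M)^{\boxtimes(n+1-j)}=0$ directly, with $n{+}1{-}j\le i{+}1$. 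No Bloch--Srinivas decomposition is needed.
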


\begin{proof}
We assume first that $\S^V_\bullet (CH_0 (\M)) \subset \S_\bullet (CH_0 (\M))$ and will deduce \eqref{constant}. Let $V = V_F$ be a constant-cycle subvariety for $F \in \M$ of dimension $n-i$, so $[F] \in \S_i^V.$ By assumption we have 
\begin{equation}
([F] -c_\M)^{\boxtimes (i+1)}=0 \in CH_0 (\M^{i+1}),
\label{boxf}
\end{equation}
 since $[F] \in \S_i.$
We set $$\overline \Delta^F = \Delta - \M \times [F] \in CH_m (\M \times \M),$$ thus
$$ \overline \Delta = \overline \Delta^F + \M \times ([F] -c_\M).$$
The spreading principle gives
\begin{equation}
\label{spreadvf}
[V_F] \cdot \overline \Delta_{01}^F\cdot \overline \Delta_{02}^F\cdots \overline \Delta^F_{0,n-i+1} = 0 \in CH_\star (\M \times \M^{n-i+1}).
\end{equation}
Expanding, the two identities \eqref{boxf} and \eqref{spreadvf} now imply that 
$$
[V_F] \cdot \overline \Delta_{01}  \cdots \overline \Delta_{0, n+1} = [V_F] \cdot \left (\overline \Delta_{01}^F + ([F] -c_\M)^{(1)}\right ) \cdots \left ( \overline \Delta^F_{0,n+1} + ([F] -c_\M)^{(n+1)} \right ) =0.
$$
(Here $\eta^{(i)}$ denotes the pullback of $\eta \in CH_\star (\M)$ from the $i$th factor to the product $\M^{n+1}.$)

\vskip.1in

Conversely, let us assume identity \eqref{constant} holds, and let $[F] \in \S_i^V,$ for $F \in \M$. Let $V_F$ be a constant-cycle subvariety for $F$ of dimension $j \geq n-i.$ Let $D$ be an ample divisor on $\M$. We have 
$$[V_F] \cdot D^{j} = a \,[F] \in CH_0 (\M),$$ for a positive number $a$. 
By assumption, 
$$[V_F] \cdot \overline \Delta_{01} \cdot \overline \Delta_{02} \cdots \overline \Delta_{0, n+1} \cdot D^{(1)} \cdot D^{(2)} \cdots D^{(j)} = 0 \in CH_0 (\M \times \M^{n+1}).$$
Pushing this cycle forward to the last $n-j+1$ factors of the product $\M^{n+1}$ we obtain
$$a \cdot ([F] -c_\M)^{\boxtimes (n-j+1)}=0 \in CH_0 (\M^{n-j+1}).$$ 
As $j \geq n-i,$ we have $n-j+1 \leq i+1,$ therefore
$$([F] -c_\M)^{\boxtimes (i+1)}=0 \in CH_0 (\M^{i+1})$$ 
and $[F] \in \S_i.$ 
\end{proof}

\begin{remark} 
Let us note that in turn, the inclusion $\S_\bullet^{\sm} \subset \S_\bullet^V$ is implied by the following statement: for any point $F\in \M$ and any constant-cycle subvariety $V_F$ of maximal dimension in the orbit $O_F$, we have 
\begin{equation}
\label{constcycleineq}
[V_F] \cdot  \overline \Delta_{01} \cdot \overline \Delta_{02} \cdots \overline \Delta_{0, n} \neq 0 \in CH_\star (\M \times \M^{n}).
\end{equation}
Indeed, let 
 $[F] \in \S_i,$ therefore $([F] - c_\M)^{\boxtimes (i+1)} = 0 \in CH_0 (\M^{i+1}).$ Let $V_F$ be a maximal constant-cycle subvariety for $F$.  Writing $ \overline \Delta_{0i} = \overline \Delta_{0i}^F + ([F] -c_\M)^{(i)}$ and expanding the product, we get 
\begin{eqnarray*} [V_F] \cdot  \overline \Delta_{01} \cdot \overline \Delta_{02} \cdots \overline \Delta_{0, n} \neq 0 &\implies&  [V_F]  \cdot  \overline \Delta_{01}^F \cdot \overline \Delta_{02}^F \cdots \overline \Delta_{0, n-i}^F \neq 0 \in CH_\star (\M \times \M^{n-i})\\ &\implies & \dim V_F \geq n-i  \implies [F] \in \S_i^V.
\end{eqnarray*}
\end{remark}

\vskip.4in

\vskip.4in

\end{document}